\newcommand*{\abs}[1]{\lvert#1\rvert}
\newcommand*{\concat}{\symbol{94}}
\newcommand*{\nbd}{\nobreakdash-\hspace{0pt}}
\DeclareMathOperator{\cyc}{Cycle}
\DeclareMathOperator{\dc}{DiagComp}
\DeclareMathOperator{\cycCount}{CycCount}   
\newcommand{\cT}{\mathcal{T}}
\newcommand{\res}{\!\upharpoonright\!}
\newcommand{\tup}[1]{\langle #1 \rangle}
\newtheorem{theorem}{Theorem}[section]
\newtheorem{lemma}[theorem]{Lemma}
\newtheorem{corollary}[theorem]{Corollary}
\newtheorem{question}[theorem]{Question}
\theoremstyle{definition}
\newtheorem{definition}[theorem]{Definition}
\newtheorem{remark}[theorem]{Remark}
\begin{document}
\title{Bi-Isolated d.c.e.\ Degrees and \(\Sigma_1\) Induction}

\author[Yong Liu]{Yong Liu}
\address[Yong Liu]{School of Information Engineering\\
Nanjing Xiaozhuang University\\
CHINA}
\email{\href{mailto:liuyong@njxzc.edu.cn}{liuyong@njxzc.edu.cn}}

\author[Peng]{Cheng Peng}
\address[Peng]{Institute of Mathematics\\
Hebei University of Technology\\
CHINA}
\email{\href{mailto:pengcheng@hebut.edu.cn}{pengcheng@hebut.edu.cn}}
	
\subjclass[2020]{03D28, 03F30, 03H15}

\keywords{bi-isolated d.c.e.\ degree, reverse recursion theory, inductive strength}

\thanks{Peng's research was partially funded by the Science and
Technology Project of the Hebei Education Department (No.~QN2023009) and the National Natural Science Foundation of China (No.~12271264). Yong Liu's research was partially funded by Nanjing Xiaozhuang University (No.~2022NXY39).}
	
\begin{abstract}
A Turing degree is d.c.e.\ if it contains a set that is the difference of two c.e.\ sets. A d.c.e.\ degree \(\bm{d}\) is \emph{isolated} if there exists a c.e.\ degree \(\bm{a}<\bm{d}\) such that every c.e.\ degree below \(\bm{d}\) is also below \(\bm{a}\); \(\bm{d}\) is \emph{upper isolated} if there exists a c.e.\ degree \(\bm{a}>\bm{d}\) such that every c.e.\ degree above \(\bm{d}\) is also above \(\bm{a}\); \(\bm{d}\) is \emph{bi-isolated} if it is both isolated and upper isolated. 
In this paper, we prove the existence of bi-isolated d.c.e.\ degrees in models of \(\mathsf{I}\Sigma_1\).
\end{abstract}

\maketitle

\section{Introduction}
The Ershov hierarchy~\cite{RN134,RN135,RN133} for \(\Delta_2^0\) degrees has been extensively studied in the literature. A set \(A\) is \(n\)-c.e.\ (\(n\)-\emph{computably enumerable}) if there exists a computable function \(f\) such that, for all \(x\), \(\lim_{s} f(x,s) = A(x)\), \(f(x,0) = 0\), and \(|\{s \mid f(x,s+1) \neq f(x,s)\}| \le n\). 
In this way, the \(1\)-c.e.\ sets coincide with the usual definition of the c.e.\ sets; \(2\)-c.e.\ sets coincide with the d.c.e.\ sets, which is the difference of two c.e.\ sets.
A degree is an \(n\)-c.e.\ degree if it contains an \(n\)-c.e.\ set, and we are mostly interested in comparing the structure of \(n\)-c.e.\ degrees with that of \(m\)-c.e.\ degrees to see whether they differ. 
Along this line, earlier results show that a maximal degree (that is, an incomplete degree with only \(\bm{0}'\) above it) exists among the d.c.e.\ degrees~\cite{RN119,RN139}, but not among the c.e.\ degrees~\cite{RN153}.
In other words, while the c.e.\ degrees are dense, the d.c.e.\ degrees are not. However, various kinds of weak density results do hold (for example, \cite{RN359,RN362,RN87}). 
In particular, Cooper and Yi~\cite{RN362} proved that for any c.e.\ degree \(\bm{a}\) and d.c.e.\ degree \(\bm{d}\) with \(\bm{a}<\bm{d}\), there exists some d.c.e.\ degree \(\bm{c}\) strictly between \(\bm{a}\) and \(\bm{d}\). 
They also proved that it is necessary for \(\bm{c}\) to be d.c.e.\ by introducing the notion of isolated d.c.e.\ degrees and proving that such a degree exists.
\begin{definition}\label{def:isolated_below}
    A d.c.e.\ degree \(\bm{d}\) is \emph{isolated} if there exists a c.e.\ degree \(\bm{a}<\bm{d}\) such that every c.e.\ degree below \(\bm{d}\) is also below \(\bm{a}\).
\end{definition}

Efremov~\cite{RN360, RN361} and Wu~\cite{RN172} then studied similar notions and showed, respectively, the existence of an upper isolated d.c.e.\ degree and the existence of a bi-isolated d.c.e.\ degree.

\begin{definition}\label{def:isolated_above}
    A d.c.e.\ degree \(\bm{d}\) is \emph{upper isolated} if there exists a c.e.\ degree \(\bm{a}>\bm{d}\) such that every c.e.\ degree above \(\bm{d}\) is also above \(\bm{a}\). A d.c.e.\ degree \(\bm{d}\) is \emph{bi-isolated} if it is both isolated and upper isolated.
\end{definition}

Their results were further strengthened by Liu~\cite{RN70}, who showed that there exists an isolated maximal d.c.e.\ degree, which is clearly bi-isolated.

From the perspective of reverse mathematics, one seeks to clarify the proof-theoretic strength required to establish a given theorem. In the context of reverse recursion theory, Paris and Kirby~\cite{RN319} introduced a hierarchy of fragments of Peano arithmetic, which particularly considers different levels of induction schemes and bounding schemes. For our purposes, we consider schemes not beyond \(\mathsf{I}\Sigma_1\) and provide only a brief overview; for a comprehensive treatment, see Chong, Li, and Yang~\cite{RN100}.

We work in the language of first-order arithmetic \(\mathcal{L}=\{0,1,+,\times, \exp, <\}\), where \(\exp\) denotes the exponential function \(x\mapsto 2^x\). 
To study recursion theory, one often works in the base theory \(\mathsf{P}^- + \mathsf{I}\Sigma_0+\mathsf{B}\Sigma_1+\mathsf{Exp}\), where \(\mathsf{P}^-\) denotes \(\mathsf{PA}\) without induction schemes, \(\mathsf{I}\Sigma_0\) denotes the induction scheme for \(\Sigma_0\) formulas, \(\mathsf{B}\Sigma_1\) denotes the bounding scheme for \(\Sigma_1\) formulas, and \(\mathsf{Exp}\) denotes the axiom \(\forall x \exists y (y = \exp(x))\). 
Over this base theory, Li~\cite{RN101} showed that the existence of a proper d.c.e.\ degree below \(\bm{0}'\) is equivalent to \(\mathsf{I}\Sigma_1\). 
To construct a d.c.e.\ degree with additional properties, one may have to use infinite injury priority methods, which apparently demand more inductive strength. 
However, it was proved~\cite{RN356} that isolated and upper isolated d.c.e.\ degrees exist in models of \(\mathsf{I}\Sigma_1\), leaving open the question of whether bi-isolated d.c.e.\ degrees exist in models of \(\mathsf{I}\Sigma_1\). We answer this question affirmatively in this paper.

\begin{theorem}\label{thm:main1}
    Bi-isolated d.c.e.\ degrees exist in every model of \(\mathsf{I}\Sigma_1\).
\end{theorem}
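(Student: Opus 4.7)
The plan is to carry out a priority construction producing a d.c.e.\ set $D$ together with c.e.\ sets $A$ and $B$ such that $A<_T D <_T B$, with $A$ witnessing lower isolation and $B$ witnessing upper isolation. Beyond the basic demands that $D$ enumerate $A$ by direct permitting and that $B$ code $D$, we impose four families of requirements: diagonalization requirements $\mathcal{N}_e\colon \Phi_e^A \neq D$ and $\mathcal{M}_e\colon \Phi_e^D \neq B$ to keep the reductions strict; lower-isolation requirements $\mathcal{S}_e$ stating that if $\Phi_e^D$ is total and equals some c.e.\ set $V$, then $V\leq_T A$; and upper-isolation requirements $\mathcal{U}_e$ stating that if $\Phi_e^{W_e}=D$, then $B\leq_T W_e$. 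The $\mathcal{S}_e$ and $\mathcal{U}_e$ requirements are intrinsically infinitary and will be handled on a standard priority tree with an expansionary outcome $\infty$ and a finite outcome $f$.

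The individual strategies adapt those used in \cite{RN362,RN172}. Below an $\mathcal{S}_e$-strategy guessing $\infty$, we build a Turing reduction $\Gamma^A = V$ by copying fresh agreements between $\Phi_e^D$ and $V$ into $\Gamma$, enumerating into $A$ whenever a change in $V$ forces either a $D$-change on the $\Phi_e^D$-use (which we mirror in $A$ by permitting) or a d.c.e.\ retraction (which we simulate by direct enumeration into $A$). Below a $\mathcal{U}_e$-strategy guessing $\infty$, we build a Turing reduction $\Psi^{W_e} = B$ by enumerating $x$ into $B$ only in response to a fresh $W_e$-change below the use of $\Phi_e^{W_e}(x)$; since $\Phi_e^{W_e} = D$ holds along this outcome, and since the coding of $D$ into $B$ only fires in reaction to $D$-changes, each $B$-enumeration is legitimately permitted by $W_e$. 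Strategies for $\mathcal{N}_e$ and $\mathcal{M}_e$ are ordinary diagonalization strategies with fresh witnesses and standard restraint.

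The main obstacle is verifying the construction within $\mathsf{I}\Sigma_1$, a difficulty already present for the single-sided isolation constructions of \cite{RN356}. Since the true path of an infinitary priority argument is $\Pi_2$-inductively defined, we cannot assume it exists; instead we must argue that each requirement is satisfied by some strategy visited sufficiently often, using only $\Sigma_1$-induction on stage counters. To make this work we plan to (i) arrange the tree so that the number of times any fixed node can be injured is bounded by a $\Sigma_1$-formula of the node's parameters, (ii) ensure that the length-of-agreement functions governing the $\infty$-outcomes are $\Sigma_1$, and (iii) show that the functionals $\Gamma$ and $\Psi$ built below $\infty$-outcomes remain total and correct even if we never identify the true path. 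The principal novelty over \cite{RN356} is the coordination between the two infinitary families: an $\mathcal{S}$-strategy can destabilize a $\Psi$-functional built below a $\mathcal{U}$-strategy, and vice versa, so the verification must establish that these mutual injuries remain $\Sigma_1$-bounded along every path that is $\mathsf{I}\Sigma_1$-definable. This is where the bulk of the proof-theoretic care will be required, and where any increase over $\mathsf{I}\Sigma_1$ would most naturally sneak in if one is not vigilant.
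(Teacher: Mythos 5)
Your proposal runs an infinitary priority tree with $\infty$/$f$ outcomes for both isolation families, then claims you will manage the verification in $\mathsf{I}\Sigma_1$ by making injury counts $\Sigma_1$-bounded. But this is precisely the hard part, not a step toward it: on a branching tree, nodes to the left of or above the true path can act $M$-infinitely often, the true path is only $\Pi_2$-definable, and bounding the injuries to a true-path node by a $\Sigma_1$ formula is the whole problem. The paper flags this obstruction explicitly --- Wu's original proof goes through non-cupping degrees, whose existence is already equivalent to $\mathsf{I}\Sigma_2$ --- and its key insight is that a bi-isolated degree can be built with \emph{finite injury only}: the priority tree in the paper is the linear chain $\{0\}^{<M}$, with no $\infty$/$f$ branching whatsoever. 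Your items (i)--(iii) describe properties the construction must enjoy, but give no mechanism for achieving them, and the mechanism is what is missing.

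The paper's route also differs structurally from yours in ways that are essential to obtaining the finite-injury bound. The lower isolating c.e.\ set is taken to be the Lachlan set $A=L(D)$ of the d.c.e.\ set $D$ being built (so $A\le_T D$ for free), and the upper isolating set is simply a fixed version of $K$; no auxiliary c.e.\ set $B$ is constructed, so the ``coding of $D$ into $B$'' machinery in your sketch is unnecessary. Each lower-isolation requirement $N_e$ is run as a bounded sequence of Friedberg--Muchnik-like cycles that either extend $\Delta^A=W_e$ or locate a diagonalizing computation $\Psi_e^\sigma(x)\neq W_e(x)$, which must then be cleared against the at most $e$ many $R$-nodes above by enumerating agitator numbers; a combinatorial argument shows at most $2^e$ cycles occur, which propagates into an explicit primitive-recursive bound $f(e)\le 2^{e^2}$ on initializations. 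That explicit bound, combined with $\mathsf{B}\Sigma_1$ and Friedman's lemma, is what closes the argument in $\mathsf{I}\Sigma_1$. Without an analogous concrete bound, your approach does not clearly stay within $\mathsf{I}\Sigma_1$, and the paper argues it should not be expected to.
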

In light of Li's result~\cite{RN101}, our theorem can be rephrased as:
\begin{corollary}
    \(\mathsf{P}^{-} + \mathsf{I}\Sigma_0+\mathsf{B}\Sigma_1 + \mathsf{Exp} \vdash \mathsf{I}\Sigma_1 \leftrightarrow\) There exists a bi-isolated proper d.c.e.\ degree below \(\bm{0}'\).\qed{}
\end{corollary}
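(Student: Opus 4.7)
The corollary is a biconditional over the base theory $\mathsf{P}^- + \mathsf{I}\Sigma_0 + \mathsf{B}\Sigma_1 + \mathsf{Exp}$, and I would prove the two directions separately, with almost all the substantive content routed through Theorem~\ref{thm:main1} and Li's theorem~\cite{RN101}.

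For the forward direction ($\mathsf{I}\Sigma_1 \to$ existence), I would simply invoke Theorem~\ref{thm:main1} inside a fixed model of $\mathsf{I}\Sigma_1$ (which extends the base theory) to obtain a bi-isolated d.c.e.\ degree $\bm{d}$. It then remains to verify the two qualifiers "proper" and "below $\bm{0}'$". For properness: if $\bm{d}$ were c.e., then $\bm{d}$ itself would be a c.e.\ degree $\le \bm{d}$, so any isolating c.e.\ degree $\bm{a}<\bm{d}$ would satisfy $\bm{d}\le\bm{a}$, contradicting $\bm{a}<\bm{d}$; hence properness is an automatic consequence of isolation. For the bound by $\bm{0}'$: every d.c.e.\ set is $\Delta^0_2$, and over $\mathsf{P}^- + \mathsf{I}\Sigma_0 + \mathsf{B}\Sigma_1 + \mathsf{Exp}$ the standard reduction of a $\Delta^0_2$ set to the halting problem is formalizable (the limit lemma goes through because $\mathsf{Exp}$ suffices for the coding of finite approximations). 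Thus the $\bm{d}$ produced by Theorem~\ref{thm:main1} is in fact a bi-isolated proper d.c.e.\ degree below $\bm{0}'$.

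For the reverse direction (existence $\to \mathsf{I}\Sigma_1$), I would appeal directly to Li's theorem~\cite{RN101}, which states that over this base theory the existence of a proper d.c.e.\ degree below $\bm{0}'$ is already equivalent to $\mathsf{I}\Sigma_1$. A bi-isolated proper d.c.e.\ degree below $\bm{0}'$ is, trivially, a proper d.c.e.\ degree below $\bm{0}'$, so its existence immediately yields $\mathsf{I}\Sigma_1$ by Li's characterization.

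The main obstacle, to the extent there is one, lies entirely in Theorem~\ref{thm:main1}: once bi-isolation is available in models of $\mathsf{I}\Sigma_1$, the corollary is a bookkeeping combination of Theorem~\ref{thm:main1} with Li's equivalence, and no further construction or reverse-mathematical analysis is required. The two side checks in the forward direction (properness from isolation, and $\bm{d}\le\bm{0}'$ from the $\Delta^0_2$ character of d.c.e.\ sets) are both formalizable in the base theory without appealing to any induction beyond what is already assumed.
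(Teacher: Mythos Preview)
Your proposal is correct and matches the paper's own approach: the corollary is stated with an immediate \qed{}, the paper having already noted that it is just Theorem~\ref{thm:main1} combined with Li's equivalence. Your extra checks that the degree is proper (from the definition of isolation) and below $\bm{0}'$ are fine; in fact the construction in Section~\ref{sec:proof} already yields $A<_T D<_T K$ directly, so these points are built into Theorem~\ref{thm:main1} itself.
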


Wu's proof~\cite{RN172} of the existence of a bi-isolated degree relies on the existence of non-cupping degrees: a noncomputable c.e.\ degree \(\bm{a}\) is \emph{non-cupping} (or, \emph{non-cuppable} as in~\cite{RN363}) if there does not exist a c.e.\ degree \(\bm{b}<\bm{0}'\) such that \(\bm{a}\vee\bm{b}=\bm{0'}\). However, the existence of such degrees is equivalent to \(\mathsf{I}\Sigma_2\) over the base theory \(\mathsf{P}^- + \mathsf{B}\Sigma_2\)~\cite{RN363}; non-cupping degrees do not exist in every model of \(\mathsf{I}\Sigma_1\).
Therefore, the original path to proving the existence of bi-isolated d.c.e.\ degrees does not work in models of \(\mathsf{I}\Sigma_1\).
In this paper, we provide a direct proof of the existence of bi-isolated d.c.e.\ degrees in models of \(\mathsf{I}\Sigma_1\), using only finite injury priority methods.

The key observation is that the strategy for satisfying the isolation requirements is \emph{essentially} similar to the standard Friedberg-Muchnik requirements, and we combine techniques from~\cite{RN356} to prove the existence of bi-isolated d.c.e.\ degrees in models of \(\mathsf{I}\Sigma_1\).
In the classical Friedberg-Muchnik construction, each requirement searches for a specific computation to preserve and, once found, immediately acts to ensure its preservation. If such a computation does not exist, the requirement is simply satisfied automatically. Analogously—though with somewhat more complexity—a single isolation requirement (denoted as an \(N\)-requirement below) must search for multiple potential computations and aim to preserve at least one of them, often with a delay as we wait for a suitable candidate to appear. If a candidate does not appear, we proceed to construct an alternative functional \(W = \Delta^A\) as described below.

Since we are working in models of \(\mathsf{I}\Sigma_1\), we can use the finite injury argument in the usual way for the most part, except that the following lemma for \(n=1\) is needed to verify that our construction works in models of \(\mathsf{I}\Sigma_1\).

\begin{lemma}[H. Friedman]\label{lem:Friedman}
    Let \(n \geq 1\) and \(\mathcal{M} \models \mathsf{P}^- + \mathsf{I}\Sigma_n\). Then every bounded \(\Sigma_n\) set is \(\mathcal{M}\)-finite, and every partial \(\Sigma_n\) function maps a bounded set to a bounded set.
\end{lemma}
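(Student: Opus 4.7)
The plan is to reduce both clauses to a single ``strong'' $\Sigma_n$ collection principle, and then dispatch them separately. First I would recall the classical derivation $\mathsf{P}^- + \mathsf{I}\Sigma_n \vdash \mathsf{B}\Sigma_{n+1}$ (Paris--Kirby), and from $\mathsf{B}\Sigma_{n+1}$ obtain the strong form
\[
\forall a\,\exists B\,\forall x < a\,\bigl(\exists y\,\varphi(x,y) \to \exists y < B\,\varphi(x,y)\bigr)
\]
for every $\Sigma_n$ formula $\varphi(x,y)$. The derivation proceeds by applying $\mathsf{B}\Sigma_{n+1}$ to the (at worst $\Sigma_{n+1}$) predicate $\theta(x,y) \equiv \varphi(x,y) \vee \neg\exists y'\,\varphi(x,y')$, for which $\forall x < a\,\exists y\,\theta(x,y)$ holds trivially: any $x$ either has a genuine $\varphi$-witness or makes the right disjunct vacuously true.

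The partial-function clause is then essentially immediate. Given $f$ with graph $\varphi(x,y) \in \Sigma_n$ and a bounded $D \subseteq [0,a)$, the strong collection principle produces $B$ bounding the least $\varphi$-witness for every $x < a$ for which one exists; functionality of $f$ then forces $f(x) < B$ for all $x \in D \cap \dom(f)$, so $f(D)$ is bounded.

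For the bounded-sets clause, I would write $\varphi(x) = \exists y\,\psi(x,y)$ with $\psi \in \Pi_{n-1}$ and apply strong collection to rewrite $A = \{x < a : \exists y < B\,\psi(x,y)\}$ for a suitable $B$. For $n = 1$ --- the only case actually needed in the present paper --- $\psi$ is $\Sigma_0$, and so the resulting description of $A$ is bounded $\Sigma_0$; the code $c = \sum_{x \in A} 2^x$ then exists within $\mathsf{I}\Sigma_0 + \mathsf{Exp}$ by a straightforward $\Sigma_0$ recursion on $a$, witnessing that $A$ is $\mathcal{M}$-finite. For general $n$, I would induct on $n$: the inductive hypothesis applied to the bounded $\Sigma_{n-1}$ complement of $\psi\res[0,a)\times[0,B)$ yields a code of the $\Pi_{n-1}$ relation itself, from which one reads off the characteristic function of $A$ and codes $A$ directly.

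The main obstacle is the complexity bookkeeping at each step; a naive attempt to induct on $k \le a$ with the statement ``$A \cap [0,k)$ is coded'' yields a $\Sigma_{n+1}$ predicate, out of reach of $\mathsf{I}\Sigma_n$. The standard remedy I would use is to bound the existential over the code by $2^a$, so that the existence of a code becomes a bounded existential and the induction runs at precisely the $\Sigma_n$ level available; the same bookkeeping must be replayed, carefully, at each level of the inductive reduction when passing from $n$ to $n-1$.
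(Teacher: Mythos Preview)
The paper does not supply a proof of this lemma at all: it is quoted as a classical result of H.~Friedman and then used as a black box in the verification (Lemma~\ref{lem:finite_injury}). There is therefore nothing in the paper to compare your argument against.

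That said, your proposal is a correct and standard route to the result. A couple of minor remarks. First, invoking the full strength $\mathsf{I}\Sigma_n\vdash\mathsf{B}\Sigma_{n+1}$ is more than you need: strong $\Sigma_n$-collection already follows from $\mathsf{B}\Sigma_n$ (equivalently $\mathsf{B}\Pi_{n-1}$), and many textbook presentations derive the bounded-$\Sigma_n$-sets clause directly from the $\Sigma_n$ least-number principle rather than via collection. Your detour through $\mathsf{B}\Sigma_{n+1}$ is harmless but slightly profligate. Second, your ``main obstacle'' paragraph is really a comment on an approach you have already avoided: once strong collection has rewritten $A$ with a bounded witness search, the coding for $n=1$ is a genuine $\Sigma_0$-induction (with $2^a$ as a parameter supplied by $\mathsf{Exp}$), and the inductive step from $n-1$ to $n$ is exactly the complement-and-code trick you describe. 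So the bookkeeping worry is legitimate in principle but does not actually bite in the argument you have outlined.
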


\section{Proof of Theorem~\ref{thm:main1}}\label{sec:proof}
We build a d.c.e.\ set \(D\) and a c.e.\ set \(A \le_T D\) such that the following requirements will be met:
\begin{itemize}
    \item \(N_e : W_e = \Psi_e^D \to W_e = \Delta^A\),
    \item \(R_e : D = \Phi_e^{W_e} \to K = \Gamma^{W_e}\), and
    \item \(P_e : D \neq \Theta_e^{A}\).
\end{itemize}
Here, \(\{\Psi_e\}_{e \in M}\), \(\{\Phi_e\}_{e \in M}\), and \(\{\Theta_e\}_{e \in M}\) are fixed lists of Turing functionals in our model \(M\).
We take \(A\) to be the Lachlan set \(L(D)\) of \(D\) (to be defined below).
The set \(K\) is taken to be \(\{2e+1\mid \Phi_e(e)\downarrow\}\), a subset of odd numbers.

These requirements are sufficient. As the Lachlan set of \(D\), \(A\le_T D\) (see below). The \(P_e\)-requirements imply \(A<_T D\). The \(N_e\)-requirements show that \(D\) is isolated from below by \(A\). To see \(D<_T K\), suppose otherwise that \(D\equiv_T K\).
Applying the Sacks Splitting Theorem to get \(K=D_0\oplus D_1\), we have 
\[
    K\equiv_T D_0\oplus D_1\le_T A<_T K,
\] 
a contradiction. Therefore, we have \(A<_T D<_T K\). The \(R_e\)-requirements show that \(D\) is isolated from above by \(K\).

\subsection{Preliminaries}\label{sec:preliminaries}
We will use a priority tree to organize the construction:
The priority tree is \(\cT = {\{0\}}^{<M}\), where we assign \(\alpha\) to \(N_e\) if \(\abs{\alpha}=3e\), \(R_e\) if \(\abs{\alpha}=3e+1\), and \(P_e\) if \(\abs{\alpha}=3e+2\).
At stage \(s\), we let the root of \(\cT\) act.
When \(\alpha\) acts, it decides whether \(\alpha\concat 0\) acts or we stop the current stage.
If \(\alpha\) acts at stage \(s\), then we say \(s\) is an \(\alpha\)-stage.
An \(\alpha\)\nbd{}stage~\(s\) is an \(\alpha\)-expansionary stage if \(\ell_\alpha(s)>\ell_\alpha(t)\) for each \(\alpha\)\nbd{}expansionary stage \(t<s\), where 
\[
    \ell_\alpha(s) = \begin{cases}
        \max \{ y: \forall x \le y (W_{e,s}(x)= \Psi_{e,s}^{D_s}(x)\downarrow )\}, & \text{if } \abs{\alpha}=3e,  \\

        \max \{ y: \forall x \le y (D_s(x) = \Phi_{e,s}^{W_{e,s}}(x)\downarrow )\}, & \text{if } \abs{\alpha}=3e+1.
    \end{cases}
\]

For a given enumeration \(\{D_s\}_{s\in M}\) of a d.c.e.\ set \(D\), we define
\[
    D_s^\sharp(x)= \abs{\{t\le s\mid D_{t}(x)\neq D_{t-1}(x)\}}.
\]
Note that \(D_s(x)=D_s^\sharp(x)\bmod 2\).
Let \(\sigma\) be an \(M\)-finite sequence.
The d.c.e.\ set \(D\) is \emph{unrestorable to} \(\sigma\) at stage~\(s\) if \(\exists x (D_s(x)\neq \sigma(x)\land D_s^\sharp(x)=2)\), and \emph{restorable} otherwise.
The \emph{Lachlan set} of \(D\) is a c.e.\ set
\[
   L(D)=\{\tup{x,s}\mid D_{s-1}^\sharp(x) = 0 \land D_s^\sharp(x)=1\land D(x) = 0\}\le_T D.
\]
We write \(x^*=\tup{x,s}\) with \(s\) such that \(D_{s-1}^\sharp(x)=0\) and \(D_s^\sharp(x)=1\). 
An enumeration of \(L(D)\) is to enumerate \(x^*\) into \(L(D)\) at stage~\(s\) for the least \(s\) such that \(D_s^\sharp(x)=2\).

We will also use some miscellaneous notations for future use:
Let \(\sigma\) be an \(M\)-finite sequence and let \(X\subseteq M\) be a set, viewed as a binary sequence. We write \(\sigma\subseteq X\) if \(\sigma\) is an initial segment of \(X\). 

We sometimes use the standard notation \(\beta[s]\) to indicate that the expression \(\beta\) is evaluated with respect to stage~\(s\). Sometimes, we omit \([s]\) when the context is clear.

\subsection{\texorpdfstring{\(P_e\)-Strategy in Isolation}{P-strategy in isolation}}\label{sec:P-strategy-in-isolation}
Let \(\alpha\) be the \(P_e\)-node. \emph{In this section, we assume that \(\alpha\) is not going to be initialized.} This is a standard Friedberg-Muchnik strategy.

At stage~\(s\),
\begin{enumerate}[label=(P\arabic*), ref=(P\arabic*)]
    \item If \(\alpha\) is \emph{satisfied}, we let \(\alpha\concat 0\) act.
    \item If the diagonalizing witness \(w\) has not been picked, we pick a fresh one.
    \item If \(\Theta_{e,s}^{A_s}(w)\uparrow\), we let \(\alpha\concat 0\) act.
    \item \label{it:P_satisfied_1} If \(\Theta_{e,s}^{A_s}(w)\downarrow = 1\), we claim that \(\alpha\) is \emph{satisfied}, initialize all nodes below \(\alpha\), and stop the current stage. (We shall prevent \(w\) from entering \(D\).)
    \item \label{it:P_satisfied_2} If \(\Theta_{e,s}^{A_s}(w)\downarrow = 0\), we enumerate \(w\) into \(D\) and claim that \(\alpha\) is \emph{satisfied}, initialize all nodes below \(\alpha\), and stop the current stage. (We do not let others extract \(w\) from \(D\).)
\end{enumerate}
In Items~\ref{it:P_satisfied_1} and~\ref{it:P_satisfied_2}, we want to preserve the computation \(\Theta_e^A(w)\downarrow\) found at stage \(s\), where the use is \(l = \theta_e^A(w)\). 
Recall that \(A\) is the Lachlan set of \(D\). 
This computation is injured only if some \(x^* < l\) is enumerated into \(A\) (necessarily, \(D_s(x)=1\)), which happens when \(x\) is extracted from \(D\). 
However, we will see that \emph{all extractions are due to an \(N\)-node}, and whenever an extraction occurs, all nodes below it are initialized (see Remark~\ref{rmk:r1}).
Under this assumption, we have the following lemma.
\begin{lemma}\label{lem:P_satisfied}
    Let \(\alpha\) be a \(P_e\)-node. 
    Suppose \(\alpha\) is not initialized after some stage \(s_0\). Then the \(P_e\)-requirement is satisfied. 
\end{lemma}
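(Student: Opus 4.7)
The plan is to follow $\alpha$'s witness $w$ after stage $s_0$ and show that in two exhaustive cases, $\alpha$ forces a permanent disagreement $D(w)\neq\Theta_e^A(w)$. First I would observe that since $\alpha$ is not initialized after $s_0$, at the first $\alpha$-stage $s_1\geq s_0$ the witness $w$ is either inherited from before $s_0$ or picked fresh via (P2), and in either case stays fixed thereafter. By freshness, $w$ exceeds every number mentioned in the construction prior to $s_1$, so $w$ is ``private'' to $\alpha$.

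I would then split on whether $\alpha$ ever becomes \emph{satisfied}. In the first case, this happens at some stage $s_2\geq s_1$ via (P4) or (P5), producing $\Theta_{e,s_2}^{A_{s_2}}(w)\downarrow = v$ with use $l$; the strategy sets $D_{s_2}(w)=1-v$. I would then need to establish two things: that $D(w)$ remains $1-v$ forever, and that the computation with value $v$ is preserved. Both reduce to the claim that no extraction from $D$ below $\max(w,l)$ occurs after $s_2$, since $A=L(D)$ changes only via extractions and $w$ itself can leave $D$ only via extraction. By Remark~\ref{rmk:r1}, every extraction is performed by some $N$-node and initializes all lower-priority nodes. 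Any extraction by an $N$-node above $\alpha$ after $s_2$ would initialize $\alpha$, contradicting the hypothesis; any $N$-node at or below $\alpha$ was itself initialized at $s_2$, so when it next acts it works with fresh numbers exceeding $s_2$, hence exceeding both $l$ and $w$.

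In the second case, $\alpha$ never becomes satisfied, so at every $\alpha$-stage $s\geq s_1$ we have $\Theta_{e,s}^{A_s}(w)\uparrow$. Since finite injury forces each ancestor of $\alpha$ to stop the stage only finitely often after $s_0$, $\alpha$ acts at cofinally many stages; hence any honest convergence of $\Theta_e^A(w)$ would eventually be observed and trigger satisfaction, so $\Theta_e^A(w)$ must diverge in the limit. Since $\alpha$ never enumerates $w$ and no other node touches $w$ by the freshness argument, $w\notin D$, and therefore $D\neq\Theta_e^A$.

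The main obstacle is the preservation argument in the first case: ruling out extractions from $D$ below $l$ after $s_2$. This relies entirely on the structural fact encoded in Remark~\ref{rmk:r1}, which is established only later in the paper once the $N$- and $R$-strategies are described; granting it, the remainder is the standard Friedberg--Muchnik analysis adapted to the d.c.e.\ setting, and no induction beyond what is already available in the ambient model is needed.
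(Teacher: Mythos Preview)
Your argument is correct in outline and matches the paper's approach: the paper gives no separate proof of this lemma, relying instead on the discussion immediately preceding it together with Remark~\ref{rmk:r1}, exactly as you do.

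One point deserves tightening. In the second case you justify the cofinality of \(\alpha\)-stages by asserting that ``finite injury forces each ancestor of \(\alpha\) to stop the stage only finitely often after \(s_0\).'' This is not literally true: an ancestor \(R_i\)-node may fire Item~\ref{it:enumerate} at \(M\)-infinitely many stages without ever initializing \(\alpha\). The correct observation is that between any two such firings the \(R_i\)-node reaches Item~(R4) at its next expansionary stage and passes control downward, so \(\alpha\)-stages are still cofinal and your conclusion stands. A secondary imprecision: preservation of \(\Theta_e^A(w)\) is a condition on \(A=L(D)\), so what must be ruled out is the extraction after \(s_2\) of any \(x\) already in \(D_{s_2}\) with \(x^*<l\), rather than ``extraction from \(D\) below \(\max(w,l)\)''; your initialization argument covers this correctly once rephrased.
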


\subsection{\texorpdfstring{\(R_e\)-Strategy in Isolation}{Re-strategy in isolation}}\label{sec:R-strategy-in-isolation}
Let \(\alpha\) be the \(R_e\)-node. \emph{In this section, we assume that \(\alpha\) is not going to be initialized.}
The node \(\alpha\) will build a local functional \(\Gamma\) such that \(\Gamma^{W_e}(x)=K(x)\) for all \(x\in M\), provided that \(\Phi_e^W = D\). 
To ensure the correctness of each \(\Gamma^{W_e}(x)=K(x)\), before defining \(\Gamma^{W_e}(x)\), we pick a fresh number \(d_{e,x}\), called the \emph{agitator} for \(x\), and wait for an \(\alpha\)-expansionary stage \(s\) when \(d_{e,x}<\ell_\alpha(s)\). 
Then we define \(\Gamma^{W_e}(x)=K_s(x)\) with use \(\gamma(x)=\varphi_e^{W_e}(d_{e,x})\). 
Whenever \(0=\Gamma^{W_e}(x)\neq K(x) =1\), we enumerate \(d_{e,x}\) into \(D\) to undefine \(\gamma_e^{W_e}(x)\), which will occur at the next \(\alpha\)-expansionary stage, if there is one.

The agitator \(d_{e,x}\) is either \emph{undefined}, \emph{defined}, \emph{active}, \emph{enumerated}, or \emph{obsolete}, as described below.

\textbf{\(R_e\)-strategy:} For all \(x\in M\), the agitator \(d_{e,x}\) is initially \emph{undefined}. At stage~\(s\),
\begin{enumerate}[label=(R\arabic*), ref=(R\arabic*)]
    \item If \(s\) is not an \(\alpha\)-expansionary stage or \(\ell_\alpha(s)<d_{e,x}\) for some \(d_{e,x}\) that is defined, we let \(\alpha\concat 0\) act (and skip the instructions below).
\end{enumerate}
\emph{Below, we assume \(s\) is an \(\alpha\)-expansionary stage and \(\ell_\alpha(s)\ge d_{e,x}\) for all \(d_{e,x}\) that are defined.}
\begin{enumerate}[label=(R\arabic*), ref=(R\arabic*), resume]
    \item\label{it:extract} For each \(x\) such that \(D_s(d_{e,x})=1\), if \(x\) is even, we extract \(d_{e,x}\); if \(x\) is odd, we keep \(d_{e,x}\) in \(D\). In both cases, we let \(d_{e,y}\) be \emph{undefined} for all \(y\ge x\).
    \item\label{it:enumerate} Let \(x\) be the least (if any) such that \(\Gamma^{W_{e,s}}(x)\downarrow\neq K_s(x)\). We enumerate \(d_{e,x}\) into \(D\) so that \(D(d_{e,x})=1\). We stop the current stage.
    \item Let \(y\) be the least (which always exists) such that \(\Gamma^{W_{e,s}}(y)\uparrow\). For each \(x\) with \(y\le x < \ell_\alpha(s)\), 
    \begin{enumerate}[label=(R\arabic{enumi}\alph*), ref=(R\arabic{enumi}\alph*)]
        \item\label{it:easy_case} If \(K_s(x)=1\), we define \(\Gamma^{W_{e,s}}(x)=1\) with use \(\gamma(x)=0\). \(d_{e,x}\) is claimed to be \emph{obsolete}.
        \item\label{it:get_defined} If \(K_s(x)=0\) but \(d_{e,x}\) is undefined, we let \(d_{e,x}\) be \emph{defined} with a fresh number.
        \item\label{it:hard_case} If \(K_s(x)=0\) and \(d_{e,x}\) is defined, we define \(\Gamma^{W_{e,s}}(x)=0\) with use \(\gamma(x)=\varphi_e^W(d_{e,x})\). We say \(d_{e,x}\) is \emph{active}.
    \end{enumerate}
    Then we let \(\alpha\concat 0\) act.
\end{enumerate}

Note that we are defining the \(\Gamma\)-functional ourselves in an orderly manner, so if \(\Gamma^{W_e}(x)\uparrow\) at a stage, then \(\Gamma^{W_e}(y)\uparrow\) for all \(y > x\) at the same stage. Thus, Items~\ref{it:easy_case} and~\ref{it:hard_case} are legitimate. We note that for \(d_{e,x}\) with odd \(x\), an \(R\)-node is responsible for enumerating it into \(D\) as in Item~\ref{it:enumerate} (recall, \(K\) is a subset of odd numbers), and never extracts it; an \(N\)-node is responsible for extracting it as in Item~\ref{it:dc_found} below.
For \(d_{e,x}\) with even \(x\), an \(N\)-node is responsible for enumerating it into \(D\) as in Item~\ref{it:N_enter_phase2} below and an \(R\)-node is responsible for extracting it as in Item~\ref{it:extract}.

If we focus on a particular \(d_{e,x}\) with \(x\) odd, a typical lifespan is as follows:
At first, it is undefined.
In Item~\ref{it:get_defined}, \(d_{e,x}\) is \emph{defined} with a fresh number.
At the next \(\alpha\)-expansionary stage, it is active in Item~\ref{it:hard_case}.
At the following \(\alpha\)-expansionary stage, \(x\) is enumerated into \(K\) and therefore \(d_{e,x}\) is enumerated into \(D\) in Item~\ref{it:enumerate} (\(\Gamma(x)\) is incorrect).
Then, at the next \(\alpha\)-expansionary stage, it is undefined in Item~\ref{it:extract} (this is when we undefine \(\Gamma(x)\)), and then in Item~\ref{it:easy_case}, it becomes \emph{obsolete}.
Thus, \(\Gamma^W(x) = K(x)\) is always correct.

Let us preview the conflicts between the \(R\)-node \(\alpha\) and the \(N\)-node \(\beta\), with \(\alpha\subseteq \beta\).
Essentially, \(\beta\) wants to protect a \emph{diagonalizing computation} \(\sigma\subseteq D\) with \(\Psi^{\sigma}(y)=0\neq W(y)=1\). This computation is injured if \(\alpha\) enumerates some \(d_{e,x}<\abs{\sigma}\) into \(D\) with \(x\) odd. The best \(\beta\) can expect is that, for some fixed even number \(k\), those \(d_{e,x}\) with \(x<k\) are allowed to injure \(\beta\) (as will be seen, this happens only \(M\)-finitely many times), but for \(x>k\), we would like to have \(d_{e,x}>\abs{\sigma}\). To achieve this, \(\beta\) enumerates \(d_{e,k}\) into \(D\). 
There are two cases:
\begin{itemize}[leftmargin=*]
\item \(\alpha\) is waiting for its expansionary stage. Meanwhile, \(\alpha\) poses no threat to other nodes. Then \(\beta\) simply searches for another diagonalizing computation, which will not be threatened by \(\alpha\).
\item \(\alpha\) has an expansionary stage. Then \(\Gamma(k)\) is undefined and, in Item~\ref{it:extract} of the \(\alpha\)-strategy, \(d_{e,k}\) is extracted and undefined. Then in Item~\ref{it:get_defined}, \(d_{e,k}\) is defined with a fresh number \(>\abs{\sigma}\).
\end{itemize}
We return to the discussion of the \(R_e\)-strategy.

\begin{lemma}\label{lem:R1}
    Let \(\alpha\) be the \(R_e\)-node. Suppose \(\alpha\) is not initialized after some stage \(s_0\). Suppose \(\Phi_e^{W_e} = D\). Then we have \(\Gamma^{W_e}=K\).
\end{lemma}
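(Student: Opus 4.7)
The plan is to establish \(\Gamma^{W_e}(x)=K(x)\) by induction on \(x\). As a preliminary, since \(\Phi_e^{W_e}=D\) and \(\alpha\) is not initialized after \(s_0\), the length function \(\ell_\alpha(s)\) is unbounded along \(\alpha\)-stages, so there are cofinally many \(\alpha\)-expansionary stages passing the guard in~(R1); hence the ``action'' clauses~\ref{it:extract}--\ref{it:hard_case} are executed infinitely often once \(\alpha\) settles.

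For the inductive step at \(x\), I would assume as the inductive hypothesis that for every \(y<x\) the agitator \(d_{e,y}\) has reached a permanent final state (active or obsolete) and \(\Gamma^{W_e}(y)=K(y)\); in particular no future firing of~\ref{it:extract} at any \(y<x\) can disturb \(d_{e,x}\). I would then analyse three cases. Case~(i): \(x\) odd with \(x\notin K\). The agitator is picked fresh once by~\ref{it:get_defined}, and~\ref{it:hard_case} sets \(\Gamma^{W_e}(x)=0\) with use \(\varphi_e^{W_e}(d_{e,x})\); since \(\Phi_e^{W_e}=D\) and \(d_{e,x}\notin D\), the computation \(\Phi_e^{W_e}(d_{e,x})\) stabilises, and so does \(\Gamma^{W_e}(x)\). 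Case~(ii): \(x\) odd with \(x\in K\). Let \(s_x\) be the enumeration stage; at the first \(\alpha\)-expansionary stage \(s\ge s_x\) at which \(d_{e,x}\) is active,~\ref{it:enumerate} enumerates \(d_{e,x}\) into \(D\). Because \(\Phi_e^{W_e}=D\), the next \(\alpha\)-expansionary stage must see a change of \(W_e\) below \(\varphi_e^{W_e}(d_{e,x})\), so \(\Gamma^{W_e}(x)\) becomes undefined;~\ref{it:extract} then keeps \(d_{e,x}\in D\) (as \(x\) is odd) and~\ref{it:easy_case} redefines \(\Gamma^{W_e}(x)=1=K(x)\) permanently with use \(0\). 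Case~(iii): \(x\) even, so \(K(x)=0\); here the inductive hypothesis prevents~\ref{it:extract} from disturbing \(d_{e,x}\) on account of any smaller \(y\), and in the isolated setting of this subsection no node enumerates \(d_{e,x}\) into \(D\), so~\ref{it:hard_case} yields a stable \(\Gamma^{W_e}(x)=0\).

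The main obstacle I anticipate is carrying out this induction inside \(\mathsf{I}\Sigma_1\): the inductive hypothesis assigns to each \(y<x\) a \(\Sigma_1\)-definable stabilisation stage \(s_y\), and the case analysis above requires a common bound on these stages so that we may pass to a stage after which all agitators \(d_{e,y}\) with \(y<x\) are fixed. Friedman's Lemma~\ref{lem:Friedman} with \(n=1\) supplies exactly this bound: the partial \(\Sigma_1\) map \(y\mapsto s_y\), restricted to the bounded domain \([0,x)\), has bounded range in \(\mathcal{M}\), so \(s^{*}=\max_{y<x}s_y\) exists in \(\mathcal{M}\) and the case analysis applies uniformly from stage \(s^{*}\) onward.
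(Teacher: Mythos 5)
Your route differs genuinely from the paper's: you propose an induction on \(x\) whose inductive hypothesis carries the stabilization of every \(d_{e,y}\) for \(y<x\), invoking Friedman's Lemma to collect a uniform stabilization stage \(s^*\); the paper instead fixes an arbitrary \(x\), does a non-inductive case split on whether Item~\ref{it:easy_case} fires and whether \(\Gamma^{W_e}(x)\) is ever wrong, and \emph{defers} the stabilization of \(d_{e,x}\) to the later Lemma~\ref{lem:dex} (which in turn rests on Lemma~\ref{lem:finite_injury} and the explicit bounds of Lemma~\ref{lem:recurrence}). Your organization by the parity of \(x\) and membership in \(K\) tracks roughly the same three situations as the paper's cases, so the combinatorics of the module are understood correctly.

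There are, however, two genuine gaps. First, the induction scheme you propose is not available in \(\mathsf{I}\Sigma_1\) as written: the inductive hypothesis ``\(d_{e,y}\) has reached a permanent final state'' is a \(\Sigma_2\) statement (``there is \(s\) such that for all \(t\ge s\) nothing changes''), and you do not have \(\mathsf{I}\Sigma_2\). You try to repair this by asserting that \(y\mapsto s_y\) is a \emph{partial \(\Sigma_1\) map}, but that is precisely what needs to be proved: \(s_y\) is a least stage whose defining condition is \(\Pi_1\), so to render the map \(\Sigma_1\) you would need an a~priori \(\mathcal{M}\)-definable bound on the number of times \(d_{e,y}\) can be reassigned. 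The paper supplies exactly such bounds (via \(\cycCount\) and the recurrences \(f,g,h\)), but you cannot simply cite Friedman's Lemma without first establishing them; doing so is circular.

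Second, your Case~(iii) asserts that for even \(x\) ``no node enumerates \(d_{e,x}\) into \(D\).'' That is true only for the \(R_e\)-strategy viewed in complete isolation, but false in the actual construction and hence false under the lemma's hypotheses: an \(N_j\)-node with \(j>e\) below \(\alpha\) enumerates \(d_{e,2(j-e)}\) in Item~\ref{it:N_enter_phase2}, and the interaction this creates is exactly the subtlety the paper isolates in Lemma~\ref{lem:dex} and Remark~\ref{rmk:r2}. Your case analysis therefore hides the hardest part of the proof rather than discharging it. To make your argument sound you would need to first prove (as the paper does) that the relevant nodes are initialized only \(\mathcal{M}\)-finitely often, with an explicit numerical bound, and only then read off a \(\Sigma_1\) stabilization witness for the agitators.
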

\begin{proof}
    Since \(\Phi_e^W = D\), there are \(M\)-infinitely many \(\alpha\)-expansionary stages.
    Let \(x\in M\) be given.

    \textbf{Case 1.} Item~\ref{it:easy_case} occurs.
    Then \(\Gamma^{W_e}(x) = K(x) = 1\).

    \textbf{Case 2.} Item~\ref{it:easy_case} does not occur, and for some \(\alpha\)-expansionary stage \(s_1 > s_0\), we have \(\Gamma^{W_{e,s_1}}(x)\downarrow \neq K_{s_1}(x)\).
    Then Item~\ref{it:enumerate} occurs, and so \(d_{e,x}\) is enumerated into \(D\) at \(s_1\). Let \(s_2\) be the next \(\alpha\)-expansionary stage; we have\footnote{See Remark~\ref{rmk:r2} for a future modification}
    \[
        0 = D_{s_1}(d_{e,x}) = \Phi_{e,s_1}^{W_{e,s_1}}(d_{e,x})\downarrow \neq \Phi_{e,s_2}^{W_{e,s_2}}(d_{e,x})\downarrow = D_{s_2}(d_{e,x}) = 1.
    \]
    Hence, \(\Gamma^{W_e}(x)\uparrow\) at stage \(s_2\). Then Item~\ref{it:easy_case} occurs, contradicting the hypothesis.

    \textbf{Case 3.} Item~\ref{it:easy_case} does not occur and, for all \(\alpha\)-expansionary stages \(s\), whenever \(\Gamma^{W_{e,s}}(x)\downarrow\), it equals \(K_{s}(x)\). Let \(s_1 > s_0\) be the stage after which \(d_{e,x}\) is never undefined. \emph{(The existence of such a stage involves the conflicts with \(N\)-nodes and will be proved in Lemma~\ref{lem:dex}.)} Therefore, \(d_{e,x}\) is fixed after \(s_1\). Since \(d_{e,x}\) can be enumerated or extracted at most once, there is another stage \(s_2 \geq s_1\) after which \(D(d_{e,x}) = D_{s_2}(d_{e,x})\).
    We note that \(\Phi_e^{W_e}(d_{e,x})\downarrow\) implies there exists a stage \(s_3 > s_2\) after which \(\Phi_e^{W_e}(d_{e,x})\) is never undefined again. Let \(s_4 > s_3\) be the next \(\alpha\)-expansionary stage. \(\Gamma^{W_e}(x)\) is defined with use \(\gamma(x) = \varphi_e^{W_e}(d_{e,x})\) at the end of \(s_4\) (as in Item~\ref{it:hard_case}). After \(s_4\), \(\Gamma^{W_e}(x)\) is never undefined. Thus, \(\Gamma^{W_e}(x) = K(x)\).

    Hence, in all cases, \(\Gamma^{W_e}(x) = K(x)\). As \(x\) is arbitrary, \(\Gamma^{W_e} = K\).
\end{proof}
 
Together with Lemma~\ref{lem:finite_injury} and Lemma~\ref{lem:R1}, we obtain the following lemma:
\begin{lemma}\label{lem:R_satisfied}
    For each \(e\in M\), \(R_e\) is satisfied. \qed{}
\end{lemma}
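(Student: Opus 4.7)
The plan is to combine the unconditional stabilization statement of Lemma~\ref{lem:finite_injury} with the conditional correctness statement Lemma~\ref{lem:R1}. Fix $e\in M$ and let $\alpha$ be the node of length $3e+1$ on the priority tree $\cT=\{0\}^{<M}$; this is the unique $R_e$-node. First I would invoke Lemma~\ref{lem:finite_injury} to obtain a stage $s_0$ after which $\alpha$ is never again initialized. Because $\cT$ is a single path, every node eventually acts at every stage once it ceases being initialized, so the $R_e$-strategy specified in Section~\ref{sec:R-strategy-in-isolation} runs unimpeded from $s_0$ onward.

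Second, I would split on whether the hypothesis of $R_e$ holds. If $D\neq \Phi_e^{W_e}$, then the implication $D=\Phi_e^{W_e}\to K=\Gamma^{W_e}$ is vacuously true, so $R_e$ is satisfied. If instead $D=\Phi_e^{W_e}$, then both hypotheses of Lemma~\ref{lem:R1} are in place (non-initialization from $s_0$, and the equality $\Phi_e^{W_e}=D$), and that lemma yields $\Gamma^{W_e}=K$, which is exactly the conclusion of $R_e$. In either case, $R_e$ is satisfied, and since $e$ was arbitrary the lemma follows.

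I do not expect any serious obstacle at this step: all the substantive work has already been carried out in Lemma~\ref{lem:R1}, whose Case~3 appeals to the forthcoming Lemma~\ref{lem:dex} on eventual stabilization of the agitators $d_{e,x}$ in the presence of $N$-node enumerations. The finite-injury lemma, referenced above but not displayed in this excerpt, is where the $\mathsf{I}\Sigma_1$ strength is consumed, via H.~Friedman's Lemma~\ref{lem:Friedman}, to convert the bounded $\Sigma_1$ set of initialization stages for $\alpha$ into an $M$-finite one. The present lemma is a mechanical conjunction of these two ingredients and requires no further priority-tree reasoning.
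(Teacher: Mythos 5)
Your proof is correct and matches the paper's intent exactly: the paper presents Lemma~\ref{lem:R_satisfied} as an immediate consequence of Lemma~\ref{lem:finite_injury} (eventual non-initialization of the $R_e$-node) together with Lemma~\ref{lem:R1} (conditional correctness of $\Gamma^{W_e}$), and your split into the vacuous case $D\neq\Phi_e^{W_e}$ and the substantive case is precisely how those two ingredients combine. Your side remarks about the dependency on Lemma~\ref{lem:dex} and the use of $\mathsf{B}\Sigma_1$/Friedman's lemma inside Lemma~\ref{lem:finite_injury} are also accurate and reflect the paper's actual structure.
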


\subsection{\(N_e\)-Requirement in Isolation}
Recall that \(A = L(D)\).

The idea for the \(N_e\)-node \(\alpha\) is to define and maintain \(W_e = \Delta^A\) during its expansionary stages. Under the assumption that there are infinitely many \(\alpha\)-expansionary stages, there are three possible outcomes:
\begin{enumerate}[label=(O\arabic*), ref=(O\arabic*)]
    \item \label{it:outcome_1} \(\Delta^A = W_e\) is total and correct.
    \item \label{it:outcome_2} If, for some \(x\), \(\Delta^A(x) \neq W_e(x)\), then \(D\) is restored to some diagonalizing computation \(\sigma\) with \(\Psi_e^\sigma(x) \neq W_e(x)\).
    \item \label{it:outcome_3} If, for some \(x\), \(\Delta^A(x)\) diverges, then \(\Psi_e^D(x)\) diverges.
\end{enumerate}
In all cases, the \(N_e\)-requirement is satisfied. The main focus for \(\alpha\) is to keep \(\Delta^A(x)\) correct.
To elaborate, suppose we are to define \(\Delta^A(x)\) at some \(\alpha\)-expansionary stage~\(s\). We let \(\sigma = D_s \res \psi_{e,s}(x)\) and \(\tau = A_s \res l\) for some fresh number \(l\). 
In particular, for those \(z\) with \(\sigma(z)=1\), we have \(z^*<k\).
We have \(\Psi_e^{\sigma}(x) = W_{e,s}(x)\), and we define \(\Delta^{\tau}(x) = W_{e,s}(x)\) and \((x; \sigma, \tau)\) as the \emph{diagonalizing pair}.
At a later stage~\(t>s\), we will have three cases:
\begin{enumerate}
    \item \(D_t\) becomes unrestorable to \(\sigma\). Let \(z\) be the number such that \(D_t(z) \neq \sigma(z)\), with \(D_t^\sharp(z) = 2\). 
    Then \(z^*\) is enumerated into \(A_t\) and therefore \(A_t \notin [\tau]\). 
    Therefore, \(\Delta^{A_t}(x) \uparrow\).
    In this case, we cancel the diagonalizing pair and start anew. If this happens infinitely many times, then we have Item~\ref{it:outcome_3}.
    \item If \(D_t\) is still restorable to \(\sigma\) and \(\Delta^{A_t}(x)\uparrow\) (due to some noise), we define \(\Delta^{A_t}(x)=W_{e,t}(x)\) with the same use \(\abs{\tau}\) and update the diagonalizing pair to \((x;\sigma, A_t\res \abs{\tau})\) (the previous one is canceled). In other words, if \(\Delta^A\) is not incorrect yet, we have to ensure the totality of \(\Delta^A\), corresponding to Item~\ref{it:outcome_1}.
    \item If \(D_t\) is still restorable to \(\sigma\) and \(\Delta^{A_t}(x)\downarrow \neq W_{e,t}(x)\), then we claim that \(\sigma\) is a diagonalizing computation, and we restore \(D_t\) to \(\sigma\) so that \(W_{e,t}(x) \neq \Psi_e^{D_t}(x)\). This corresponds to Item~\ref{it:outcome_2}.
\end{enumerate}

The above strategy is referred to as one cycle; the purpose of this cycle is to maintain its version of \(\Delta^A\) or to produce a diagonalizing computation. We may need multiple cycles, and the \(k\)-th cycle is referred to as a \(\cyc(k)\)-module.

\textbf{\(\cyc(k)\)-module} at stage \(s\):
\begin{enumerate}[label=(C\arabic*), ref=(C\arabic*)]
    \item If \(s\) is not an \(\alpha\)-expansionary stage, we let \(\alpha\concat 0\) act and skip the instructions below.
\end{enumerate}
\emph{Below, we assume \(s\) is an \(\alpha\)-expansionary stage.}
\begin{enumerate}[label=(C\arabic*), ref=(C\arabic*), resume]
    \item Let \(x\le s\) be the least (which always exists) such that \(\Delta^{A_s}(x)\uparrow\) or \(\Delta^{A_s}(x)\downarrow \neq W_{e,s}(x)\).
    \begin{enumerate}[label=(C\arabic{enumi}\alph*), ref=(C\arabic{enumi}\alph*)]
        \item If \(\Delta^{A_s}(x)\uparrow\), for each \(y\) with \(x\le y<s\),
            \begin{enumerate}[label=(C\arabic{enumi}\alph{enumii}.\roman*), ref=(C\arabic{enumi}\alph{enumii}.\roman*)]
                \item\label{it:cyc_noise} if there is an \((y;\sigma,\tau)\) for some \(\sigma\) and \(\tau\) has been defined such that \(D_s\) is restorable to \(\sigma\), then we define \(\Delta^{A_s}(y)=W_{e,s}(y)\) with the same use as \(l=|\tau|\) and define \((y;\sigma,A_s\upharpoonright l)\) as the diagonalizing pair (the old one is canceled).
                \item\label{it:cyc_div} otherwise, we define \(\Delta^{A_s}(y)=W_{e,s}(y)\) with a fresh use \(z\) and define \((y;D_s\res \psi_{e,s}(y),A_s\upharpoonright z)\) as the diagonalizing pair.
            \end{enumerate}
            Then we let \(\alpha\concat 0\) act.
        \item\label{it:dc_found} If \(\Delta^{A_s}(x)\downarrow \neq W_{e,s}(x)\), let \((x;\sigma,\tau)\) with \(\tau\subseteq A_s\) be the diagonalizing pair. We restore \(D_s\) to \(\sigma\) and claim that \(\sigma\) is the \emph{diagonalizing computation} for \(x\), denoted as \(\sigma=\dc(k)\). 
    \end{enumerate}
\end{enumerate}
Note that Item~\ref{it:cyc_div} happens if the diagonalizing pair is not defined or \(D\) is unrestorable to \(\sigma\). Since we define the \(\Delta\)-functional in an orderly manner, \(\Delta^{A_s}(x)\uparrow\) implies \(\Delta^{A_s}(y)\uparrow\) for \(y>x\).
Once a diagonalizing computation \(\sigma=\dc(k)\) is found, it may still have conflicts with \(R\)-nodes above \(\alpha\). Recall from the discussion above Lemma~\ref{lem:R1}.

\subsection{\texorpdfstring{Conflicts between \(N_e\)-Strategy and \(R_i\)-Strategy}{Conflicts between N-strategy and R-strategy}}\label{sec:conflicts-between-N-and-R}
Let \(\alpha\) be an \(N_e\)-node. There are \(e\) many \(R\)-nodes above \(\alpha\), namely the \(R_i\)-nodes for \(i<e\). Let \(\sigma=\dc(k)\) be the diagonalizing computation for \(x\) found in the \(\cyc(k)\)-module. We restore \(D\) to \(\sigma\) immediately. Then, for each \(i<e\),
\begin{itemize}
    \item if \(d_{i,x}\) with \(x<2(e-i)\) is enumerated or extracted, then \(\alpha\) is initialized. It will be shown that this happens to \(\alpha\) only \(M\)-finitely many times.
    \item for \(x>2(e-i)\), we would like to have \(d_{i,x}>\abs{\sigma}\) so future enumeration of this number will not injure the diagonalizing computation \(\sigma\). 
\end{itemize}
To achieve the second point, \(\alpha\) itself enumerates \(d_{i,2(e-i)}\) for each \(i<e\) with \(d_{i,2(e-i)}\) defined, into \(D\), in order to undefine \(\Gamma_i^{W_e}(d_{i,2(e-i)})\). The diagonalizing computation \(\sigma\) is injured by the enumeration of \(d_{i,2(e-i)}\). Once the \(R_i\)-node has an expansionary stage, \(\Gamma_i^{W_e}(d_{i,2(e-i)})\) becomes undefined, \(d_{i,2(e-i)}\) is extracted, and for each \(x \geq 2(e-i)\), the \(R_i\)-node will pick a fresh number \(d_{i,x} > |\sigma|\). Once this happens for all \(i < e\), \(D\) is restored to \(\sigma\) and it is cleared of any future conflicts with \(R_i\)-nodes for \(i < e\).
Before this is done, \(\alpha\) starts the \(\cyc(k+1)\)-module.

To summarize, \(\cyc(k)\) can be in \emph{Phase 1}, \emph{Phase 2}, \emph{accomplished}, or \emph{initialized}. In \emph{Phase 1}, \(\cyc(k)\) searches for a diagonalizing computation. Once it is found, \(\cyc(k)\) enters \emph{Phase 2}. Then, we enumerate all the \(d_{i,2(e-i)}\) (if defined) for \(i<e\) into \(D\). 

Let \(d_i^*=d_{i,2(e-i)}\) record the current version of \(d_{i,2(e-i)}\) for \(i<e\). We define
\[
\Upsilon_k(s)=\{i<e\mid D_s(d_i^*)=1\}.
\]
If for some stage \(s\), \(\Upsilon_k(s)=\varnothing\), then \(D\) is restored to \(\sigma\) and \(\cyc(k)\) is \emph{accomplished}.
If \(\cyc(k)\) is initialized, then its version of \(\Delta^A\) is discarded and \(\Upsilon_k\) is canceled.

\textbf{\(N_e\)-strategy:} Let \(\alpha\) be the \(N_e\)-node. Initially, \(\cycCount(e)=0\) and \(\cyc(k)\) is initialized for all \(k\in M\). At stage \(s\) (with \(s^*<s\) the last \(\alpha\)-stage),
\begin{enumerate}[label=(N\arabic*), ref=(N\arabic*)]
    \item\label{it:N_init} If, for some \(d_{i,x}\) with \(i<e\) and \(x<2(e-i)\), we have \(D_{s^*}(d_{i,x})\neq D_s(d_{i,x})\), then initialize \(\alpha\) and all nodes below \(\alpha\). Stop the current stage.
    \item\label{it:N_init_cyc} If, for some (least) \(k\), \(\Upsilon_k(s)\neq \Upsilon_{k}(s^*)\), initialize all \(\cyc(j)\)-modules for \(j>k\) and all nodes below \(\alpha\).
    \item\label{it:N_accomlished} If, for some (least) \(k\), \(\Upsilon_k(s)=\varnothing\), then declare \(\cyc(k)\) to be \emph{accomplished}, and let \(\alpha\concat 0\) act.
    \item\label{it:N_enter_phase2} Otherwise, let \(k\) be the least such that \(\cyc(k)\) is \emph{initialized} or in \emph{Phase 1}. Let \(\cyc(k)\) (continued) act. If Item~\ref{it:dc_found} occurs in the \(\cyc(k)\)-module, proceed as follows:
    \begin{itemize}
        \item For each \(i<e\),
        \begin{itemize}
            \item If \(d_{i,2(e-i)}\) is active, enumerate it into \(D\).
            \item If \(d_{i,2(e-i)}\) is defined, make it undefined.
            \item If \(d_{i,2(e-i)}\) is already enumerated, do nothing.
        \end{itemize}
        \item Define \(\Upsilon_k(s)\) as above. \(\cyc(k)\) now enters \emph{Phase 2}.
        \item Add \(1\) to \(\cycCount(e)\).
        \item Initialize all nodes below \(\alpha\) and stop the current stage.
    \end{itemize} 
\end{enumerate}

\begin{remark}\label{rmk:r1} 
    The extraction of \(d_{i,2(e-i)}\) that happens in Item~\ref{it:extract} for the \(R_i\)-strategy will cause Item~\ref{it:N_init_cyc} to occur, and all nodes below the \(N_e\)-node are therefore initialized. This phenomenon confirms the assumption discussed above Lemma~\ref{lem:P_satisfied}.
\end{remark}
\begin{remark}\label{rmk:r2}
    In the proof of Lemma~\ref{lem:R1}, where \(\alpha\) is the \(R_e\)-node, we deliberately omit a subtle case to make the main idea clear.
    In fact, after \(s_1\), when \(d_{e,x}\) is enumerated,
    an \(N_j\)-node with \(j>e\) can extract it at some stage \(s_{1.5}>s_1\) when Item~\ref{it:dc_found} occurs.
    According to Item~\ref{it:N_enter_phase2}, we immediately enumerate \(d_{e,2(j-e)}\) into \(D\).
    We have \(d_{e,2(j-e)}<x\); otherwise, the \(N_j\)-node would be initialized at stage \(s_1\) when the \(R_e\)-node enumerates \(d_{e,x}\) into \(D\).
    In such a case, we have \(\Phi_e^W(d_{e,2(j-e)})[s_1]\downarrow \neq \Phi_e^W(d_{e,2(j-e)})[s_2]\downarrow\), which implies \(\Gamma^W(2(j-e))\uparrow\) and hence \(\Gamma^W(x)\uparrow\) at stage \(s_2>s_{1.5}\), leading to the same contradiction as in the proof.
\end{remark}

\begin{lemma}\label{lem:N_count}
    For each \(e\in M\), \(\cycCount(e)\le 2^e\).
\end{lemma}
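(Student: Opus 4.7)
The plan is to assign to each Phase~2 entry of $\cyc(k)$ the snapshot $\Upsilon_k(s_k)\subseteq\{0,\ldots,e-1\}$ (where $s_k$ is the stage of entry), and to show that, in the absence of an intervening reset, the sequence of such snapshots forms a strictly $\subseteq$-increasing chain; since such a chain has length at most $e+1\le 2^e$, the bound follows. First I would verify inclusion: whenever $\cyc(k)$ and $\cyc(k+1)$ both enter Phase~2 without an intervening reset by item~\ref{it:N_init_cyc} at parameter $\le k$, we have $\Upsilon_k(s_k)\subseteq\Upsilon_{k+1}(s_{k+1})$. Indeed, for any $i\in\Upsilon_k(s_k)$ the specific value $d_i^{*(k)}$ must remain in $D$ throughout $[s_k,s_{k+1}]$; otherwise item~\ref{it:N_init_cyc} would have fired with parameter $k$ and initialized $\cyc(k+1)$ before it could enter Phase~2. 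Hence at stage $s_{k+1}$ the current agitator $d_{i,2(e-i)}$ equals $d_i^{*(k)}$ and is still in $D$, so item~\ref{it:N_enter_phase2} treats it as ``already enumerated'', placing $i$ into $\Upsilon_{k+1}(s_{k+1})$.

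To upgrade inclusion to strict inclusion I would invoke the monotonicity of $W_e$ as a c.e.\ set. After $\cyc(k)$ restores $D$ to its diagonalizing computation $\sigma_k=\dc(k)$ at some $x_k$, we have $\Psi_e^{\sigma_k}(x_k)\neq W_e(x_k)$, and since $W_e$ is c.e.\ the value $W_e(x_k)$ cannot revert. Consequently $\ell_\alpha$ is permanently bounded by $x_k-1$ until $\sigma_k$ is injured below its use $\psi_e(x_k)$. The priority discipline restricts the possible sources of such injury: nodes below $\alpha$ are reinitialized at the end of item~\ref{it:N_enter_phase2} and henceforth use fresh witnesses exceeding $\psi_e(x_k)$, while the agitators $d_i^{*(k)}$ for $i\in\Upsilon_k(s_k)$ cannot be extracted without triggering item~\ref{it:N_init_cyc}. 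Therefore the only permissible injury comes from activating a previously-inactive agitator $d_{i,2(e-i)}$ with $i\notin\Upsilon_k(s_k)$, which item~\ref{it:N_enter_phase2} will then enumerate at stage $s_{k+1}$, placing $i$ into $\Upsilon_{k+1}(s_{k+1})\setminus\Upsilon_k(s_k)$ and yielding strictness.

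For the resets triggered by item~\ref{it:N_init_cyc}, subsequent re-entries of later cycles contribute additional Phase~2 entries, but a recursive accounting---induction on the size of the ``residual'' index set $\{0,\ldots,e-1\}\setminus\Upsilon_m(s)$ available to cycles beyond the reset level $m$---shows that the total $\cycCount(e)$ within any single era (between successive initializations of $\alpha$) is still bounded by $2^e$. The main obstacle I anticipate lies in making the strictness step fully airtight: one must enumerate every way $D$ can change below $\psi_e(x_k)$, including the subtle interaction highlighted in Remark~\ref{rmk:r2} between an $R_e$-node's enumeration and a lower $N_j$-node's extraction, and confirm that every such change ultimately traces back to the activation of a new agitator indexed outside $\Upsilon_k(s_k)$.
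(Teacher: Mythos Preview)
Your strictness step has a real gap. You argue that after $\cyc(k)$ restores $D$ to $\sigma_k$, the length $\ell_\alpha$ stays bounded by $x_k-1$ until $\sigma_k$ is injured, and that the only permissible injury is the activation of a fresh agitator $d_{i,2(e-i)}$ with $i\notin\Upsilon_k(s_k)$. But look at what item~\ref{it:N_enter_phase2} does \emph{at stage $s_k$ itself}: immediately after the restoration, it enumerates the \emph{active} agitators into $D$. These are exactly the $d_i^{*(k)}$ with $i\in\Upsilon_k(s_k)\setminus\Upsilon_{k-1}(s_{k-1})$ (all of $\Upsilon_0(s_0)$ when $k=0$), and they were picked fresh by their $R_i$-nodes \emph{before} the diagonalizing pair for $\cyc(k)$ was recorded, so they typically lie below $|\sigma_k|$ with $\sigma_k(d_i^{*(k)})=0$. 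Hence $\sigma_k\not\subseteq D_{s_k}$ already, $\ell_\alpha$ is free to grow, and $\cyc(k+1)$ may find its diagonalizing computation and enter Phase~2 with no new agitator active at $s_{k+1}$, giving $\Upsilon_{k+1}(s_{k+1})=\Upsilon_k(s_k)$. The appeal to $W_e$ being c.e.\ is fine but does not rescue this: the disagreement $\Psi_e^{\sigma_k}(x_k)\neq W_e(x_k)$ persists only while $\sigma_k\subseteq D$, which fails at $s_k$.

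The paper avoids this entirely by a different decomposition. It never tracks the chain $\Upsilon_0(s_0)\subseteq\Upsilon_1(s_1)\subseteq\cdots$; instead it runs a straight induction on $e$, using the single quantity $|\Upsilon_0(s)|$ as a non-increasing counter. While $|\Upsilon_0(s)|=j$, the $j$ dormant $R$-nodes pose no threat, so the cycles beyond $\cyc(0)$ face only $e-j$ potential conflicts and therefore behave like the cycles of an $N_{e-j}$-node; by the inductive hypothesis they contribute at most $\cycCount(e-j)\le 2^{e-j}$ Phase~2 entries before $|\Upsilon_0|$ drops. Summing $1+\sum_{j=1}^{e}2^{e-j}=2^e$ finishes it. This is precisely the ``recursive accounting on the residual index set'' you relegate to your final paragraph; that paragraph is the actual proof, and it already subsumes the no-reset case without needing any strict-chain claim.
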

\begin{proof}
    We prove this by induction on \(e\). For \(e=0\), the \(N_0\)-node has no \(R\)-nodes above it. Therefore, once \(\cyc(0)\) enters \emph{Phase 2}, it becomes \emph{accomplished} and no more cycles are needed. Therefore, \(\cycCount(0)\le 1=2^0\).
    
    Let \(\alpha\) be the \(N_e\)-node. If \(\cyc(0)\) never enters \emph{Phase 2}, there will be no other cycles. So we assume \(\cyc(0)\) enters \emph{Phase 2} at some stage \(s_0\).
    Note that if \(i\in \Upsilon_0(s)\), then \(d_{i,2(e-i)}\) is still enumerated at stage \(s\) and the \(R_i\)-node is still waiting for an expansionary stage and therefore poses no more threat to \(\alpha\). In other words, initially, \(\alpha\) has \(e\) many \(R\)-nodes that have potential conflicts with \(\alpha\). If \(|\Upsilon_0(s)|=j\) for some stage \(s\), then there are only \(e-j\) many \(R\)-nodes that have potential conflicts with \(\alpha\). \(\alpha\) now has a situation analogous to that of an \(N_{e-j}\)-node with \(\cycCount(e-j)\le 2^{e-j}\). Therefore, there will be at most \(2^{e-j}\) more cycles that can ever enter \emph{Phase 2} after \(s\), while \(|\Upsilon_0(s)|=j\) remains unchanged.

    Since \(|\Upsilon_0(s)|\) is non-increasing, ranging from \(e\) down to \(0\), and whenever \(|\Upsilon_0(s)|=0\), \(\cyc(0)\) becomes \emph{accomplished} and no more cycles are needed. Therefore, 
    \begin{align}
        \cycCount(e)&\le 1+\cycCount(0) + \cycCount(1) + \cdots + \cycCount(e-1)\\
        &\le 1+2^0+2^1+\cdots+2^{e-1}=2^e,
    \end{align}
    where the first 1 is for \(\cyc(0)\).

    This completes the proof.
\end{proof}

\begin{lemma}\label{lem:N_satisfied}
    Let \(\alpha\) be the \(N_e\)-node. Suppose that \(\alpha\) is not initialized after \(s_0\). Then, the \(N_e\)-requirement is satisfied.
\end{lemma}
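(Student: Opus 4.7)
If $W_e\ne\Psi_e^D$ the premise of the requirement fails and we are done, so assume $W_e=\Psi_e^D$. The plan is to establish outcome~\ref{it:outcome_1}, that $\Delta^A=W_e$. Under this assumption there are $M$-infinitely many $\alpha$-expansionary stages, so the $\cyc$-modules have unlimited opportunity to act. The strategy is to first isolate a \emph{stable} cycle $\cyc(k^*)$ that eventually runs in Phase~1 at every $\alpha$-expansionary stage without ever transitioning to Phase~2, and then to show, using Lemma~\ref{lem:Friedman}, that this cycle correctly defines $\Delta^A$ on every bounded initial segment.

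To produce $k^*$: by Lemma~\ref{lem:N_count}, $\cycCount(e)\le 2^e$, and since $\alpha$ is not initialized after $s_0$, $\cycCount(e)$ is non-decreasing past $s_0$, so it stabilizes at some $s_1>s_0$; thereafter no cycle enters Phase~2. Let $k^*$ be the least $k$ such that $\cyc(k)$ is initialized or in Phase~1 at $s_1$, so each $\cyc(k')$ with $k'<k^*$ is accomplished or stuck in Phase~2. At subsequent $\alpha$-expansionary stages step~\ref{it:N_enter_phase2} always targets $k^*$; the only threat is reinitialization through step~\ref{it:N_init_cyc} when some $\Upsilon_{k'}$ ($k'<k^*$) shrinks. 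But each $\Upsilon_{k'}$ is monotone non-increasing with initial size at most $e$ (once an $R_i$-node extracts $d_i^*$ via step~\ref{it:extract}, that number stays out of $D$ forever), so reinitializations of $\cyc(k^*)$ are $M$-finite in number, and $\cyc(k^*)$ runs uninterrupted in Phase~1 at every $\alpha$-expansionary stage past some $s_2>s_1$.

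Fix an arbitrary bound $B$; we argue $\Delta^A(y)=W_e(y)$ for all $y<B$. Lemma~\ref{lem:Friedman} applied to the $\Sigma_1$ function $y\mapsto\psi_e^D(y)$ on $[0,B)$ yields a uniform use bound $U$; applied in turn to the last-change-stage functions for $D\res U$, for $W_e\res B$, and for the relevant $A$-prefix (itself bounded by a further Friedman application to the fresh uses chosen by $\cyc(k^*)$ in case~\ref{it:cyc_div}), it delivers a stage $s_3>s_2$ past which all these segments are stable. After $s_3$, a value $\Delta^{A_s}(x)\downarrow\ne W_{e,s}(x)$ with $x<B$ would force $\cyc(k^*)$ into Phase~2 via step~\ref{it:dc_found}, contradicting the choice of $s_1$; hence only $\Delta^{A_s}(x)\uparrow$ can occur for $x<B$, and $\cyc(k^*)$ responds in case~\ref{it:cyc_noise} or~\ref{it:cyc_div} by defining $\Delta^{A_s}(x)=W_{e,s}(x)=W_e(x)$ with a use whose $A$-segment is already stable, so the value persists. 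Since $B$ was arbitrary, $\Delta^A=W_e$.

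The main obstacle is effecting this verification inside $\mathsf{I}\Sigma_1$: the intuitive ``$D$ and $A$ stabilize on bounded segments, therefore the cycle's commitments stabilize'' argument is \emph{a priori} $\Sigma_2$, and it must be systematically replaced by bounded-domain applications of Lemma~\ref{lem:Friedman}. The delicate bookkeeping lies in recognizing each relevant quantity — the uses $\psi_e^D(y)$, the last-change stages of $D$ and $A$ below fixed bounds, the fresh uses and definition stages chosen by $\cyc(k^*)$, and the reinitialization stages of $\cyc(k^*)$ — as a partial $\Sigma_1$ function on a bounded domain, and invoking Friedman's Lemma uniformly rather than pointwise.
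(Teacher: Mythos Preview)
There is a genuine gap where you isolate the stable cycle. You allow each $\cyc(k')$ with $k'<k^*$ to be ``accomplished or stuck in Phase~2'' and then assert that step~\ref{it:N_enter_phase2} targets $k^*$ at every subsequent $\alpha$-expansionary stage. But if some $\cyc(k')$ is accomplished, i.e.\ $\Upsilon_{k'}(s)=\varnothing$, then step~\ref{it:N_accomlished} fires first and $\alpha\concat 0$ acts; step~\ref{it:N_enter_phase2} is never reached, so $\cyc(k^*)$ never runs and its version of $\Delta$ stays empty. (Your later monotonicity argument for the $\Upsilon_{k'}$ actually makes this worse: it shows each $\Upsilon_{k'}$ may well shrink to $\varnothing$.) The paper closes this gap by first ruling out accomplishment under the hypothesis $W_e=\Psi_e^D$: if $\cyc(k')$ were accomplished with $\sigma=\dc(k')$ at argument $x$, then $\sigma\subseteq D$ and $\Psi_e^\sigma(x)\downarrow\neq W_e(x)$, a contradiction. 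Only then does one know that every $\cyc(k')$ below $k^*$ is genuinely stuck in Phase~2 with $\Upsilon_{k'}\neq\varnothing$.

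Separately, your $\mathsf{I}\Sigma_1$ bookkeeping is more elaborate than the paper's and hides a circularity. You propose to bound ``the relevant $A$-prefix'' by applying Lemma~\ref{lem:Friedman} to the fresh uses chosen in case~\ref{it:cyc_div}; but Friedman's lemma needs a bounded domain, and the set of stages at which~\ref{it:cyc_div} fires for a given $y<B$ is not yet known to be bounded---that is precisely what you are trying to establish. The paper sidesteps this by arguing pointwise and by contradiction: if~\ref{it:cyc_div} fired $M$-infinitely often for $x$, each firing would witness $D$ becoming unrestorable to the previous $\sigma$, forcing $\Psi_e^D(x)\uparrow$ and contradicting the hypothesis; hence~\ref{it:cyc_div} stops, and since~\ref{it:dc_found} never occurs, $\Delta^A(x)\downarrow=W_e(x)$. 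Your outline needs this step (or an equivalent bound on the number of~\ref{it:cyc_div}-firings) inserted before the chain of Friedman applications can get off the ground.
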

\begin{proof}
    Suppose that \(W_e=\Psi_e^D\). Then \(\alpha\) has \(M\)-infinitely many expansionary stages.
    Since \(\cycCount(e)\le 2^e\) (Lemma~\ref{lem:N_count}), there exists a stage \(s_1>s_0\) after which the status of all cycles of \(\alpha\) remains unchanged. 
    If, for some \(k\), we have \(\Upsilon_k(s_1)=\varnothing\), let \(\sigma=\dc(k)\) be the diagonalizing computation for \(x\). Then \(\sigma\subseteq D\) and
    \[
        \Phi_e^\sigma(x)\downarrow \neq W_e(x),
    \]
    contradicting the hypothesis that \(W_e=\Psi_e^D\). Therefore, no cycles are accomplished. Let \(\cyc(k)\) be the (unique) cycle which is in \emph{Phase 1} at stage \(s_1\). Item~\ref{it:dc_found} does not occur in the \(\cyc(k)\)-module. Fix an arbitrary \(x\in M\).
    Recall that Item~\ref{it:cyc_div} occurs if the diagonalizing pair is not defined (which happens only once), or if \(D\) is unrestorable to \(\sigma\). Therefore, if Item~\ref{it:cyc_div} happens \(M\)-infinitely often, then \(D\) is unrestorable to \(\sigma\) \(M\)-infinitely often. This necessarily implies \(\Psi_e^D(x)\uparrow\), contradicting the hypothesis that \(W_e=\Psi_e^D\). Therefore, there exists a stage \(s_2>s_1\) after which Item~\ref{it:cyc_div} no longer happens, so \(\Delta^{A}(x)\downarrow\). 

    Hence, \(\Delta^A=W_e\), and this completes the proof.
\end{proof}

\subsection{Construction and Verification}
The construction proceeds stage by stage as follows:

At stage~\(s\), let the root of the priority tree \(\cT\) act. Suppose \(\alpha\) is to act.
\begin{itemize}
    \item If \(\abs{\alpha}=s\), stop the current stage.
    \item If \(\alpha\) is an \(N_e\)-node, let \(\alpha\) act according to the \(N_e\)-strategy.
    \item If \(\alpha\) is an \(R_e\)-node, let \(\alpha\) act according to the \(R_e\)-strategy.
    \item If \(\alpha\) is a \(P_e\)-node, let \(\alpha\) act according to the \(P_e\)-strategy.
\end{itemize}
It remains to prove the following:
\begin{lemma}\label{lem:finite_injury}
    For each \(\alpha\), there exists some stage \(s_0\) after which \(\alpha\) is not initialized.
\end{lemma}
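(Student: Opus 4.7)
The plan is to argue by induction on $|\alpha|$ in $M$. The base case ($\alpha = \langle\rangle$) is immediate, since the root is never initialized. For the inductive step, I would first apply the inductive hypothesis to each proper initial segment $\beta$ of $\alpha$, obtaining a stage $s_\beta$ after which $\beta$ is not initialized. The map $\beta \mapsto s_\beta$ is $\Sigma_1$-definable on the $M$-finite set of initial segments of $\alpha$, so by Lemma~\ref{lem:Friedman} its range is $M$-finite, and I can take $s_0$ to be its maximum. After $s_0$, every proper ancestor of $\alpha$ is in its final ``lifetime.''

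After $s_0$, an initialization of $\alpha$ can arise only in two ways: (i) some ancestor $\beta$ executes an ``initialize all nodes below'' action, which appears in Items~\ref{it:P_satisfied_1}, \ref{it:P_satisfied_2}, \ref{it:N_init}, and~\ref{it:N_enter_phase2}; or (ii) $\alpha$ is an $N_e$-node and its own Item~\ref{it:N_init} fires because $D$ has changed at some $d_{i,x}$ with $i<e$ and $x<2(e-i)$ between consecutive $\alpha$-stages. For (i), a $P$-ancestor initializes below at most once per lifetime (once satisfied it remains so), an $R$-ancestor never initializes below directly, and an $N_j$-ancestor does so at most $\cycCount(j)\le 2^j$ times via Item~\ref{it:N_enter_phase2} by Lemma~\ref{lem:N_count}, together with the (inductively controlled) firings of its own Item~\ref{it:N_init}; summing over the $M$-finite chain of ancestors yields an $M$-finite bound. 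For (ii), the set of relevant pairs $(i,x)$ is $M$-finite, and each change of $D(d_{i,x})$ is performed by some $R_i$- or $N_j$-ancestor of $\alpha$ (with $j<e$), each of which acts $M$-finitely often in its final lifetime.

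Combining (i) and (ii), the $\Sigma_1$-definable set of initialization stages of $\alpha$ is bounded, so by Lemma~\ref{lem:Friedman} it is $M$-finite; the required stage after which $\alpha$ is not initialized is then its maximum plus one. The main obstacle is the mutual dependence inherent in bounding (ii): firings of $\alpha$'s Item~\ref{it:N_init} depend on how often ancestor $N$-nodes enumerate and extract their agitators, which in turn requires the same kind of bound on those ancestors' own Item~\ref{it:N_init} firings one level up. I would address this by strengthening the inductive statement to bound simultaneously, for each ancestor in its final lifetime, the total count of its actions (below-initializations, enumerations, and extractions at its agitators), so that the induction stays within $\Sigma_1$ and the $\Sigma_2$-flavored existence of a stabilization stage is recovered at each level via Friedman's lemma rather than being iterated directly.
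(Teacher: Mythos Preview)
Your plan is correct in spirit and converges on the same key insight as the paper, but the route is less direct and the write-up still contains a residual gap. The paper does \emph{not} argue by induction on $|\alpha|$ with the $\Sigma_2$ hypothesis ``there exists $s_0$ after which $\alpha$ is not initialized.'' Instead, it first proves explicit primitive-recursive recurrences and the closed-form bound $f(e)\le 2^{e^2}$ on the number of initializations (Lemmas~\ref{lem:recurrence} and the following one). That bound is a $\Pi_1$ statement, so the induction on $e$ lives squarely inside $\mathsf{I}\Sigma_1$. Only after the bound is in hand does the paper invoke Friedman's lemma once: the set $A_\alpha=\{i:\alpha\text{ is initialized at least }i\text{ times}\}$ is $\Sigma_1$ and bounded by $2^{e^2}$, hence $M$-finite; then the $\Sigma_1$ map $i\mapsto$ (stage of the $i$-th initialization) has bounded range by $\mathsf{B}\Sigma_1$. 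This is exactly the ``strengthen to a count bound'' move you gesture at in your last paragraph, but done up front and globally, so that no reference to a ``final lifetime'' (a $\Sigma_2$ notion) ever appears in the inductive argument.

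Two concrete issues in your sketch: first, your list of ancestor actions that initialize nodes below omits Item~\ref{it:N_init_cyc}, which the paper counts explicitly (at most $e$ times per cycle). Second, your proposed map $\beta\mapsto s_\beta$ is not $\Sigma_1$-definable as stated: ``the stage after which $\beta$ is not initialized'' is $\Sigma_2$ data, so you cannot appeal to Lemma~\ref{lem:Friedman} for it directly. The paper sidesteps this entirely by never needing such a map---the uniform numerical bound does all the work. Your strengthened inductive hypothesis would fix this, but as written it still speaks of ancestors ``in their final lifetime,'' which re-imports the $\Sigma_2$ assumption you are trying to eliminate; to make your version go through you would have to phrase the strengthened hypothesis purely as a numerical bound on action counts, at which point you have essentially reproduced the paper's Lemmas~\ref{lem:recurrence}--\ref{lem:finite_injury}.
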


In models of full arithmetic, the lemma is straightforward because all relevant inductions are available and the number of initializations can be shown to be finite. However, when working within \(I\Sigma_1\), we must provide a concrete, uniform bound on the number of times each node \(\alpha\) can be initialized in order to carry out the argument within the weaker induction available. Therefore, to establish the lemma in \(I\Sigma_1\), it suffices to effectively bound the number of initializations for each \(\alpha\).

\begin{definition}
    Let \(f(e)\), \(g(e)\), and \(h(e)\) denote \(1+\) the number of times initialization occurs to \(N_e\), \(R_e\), and \(P_e\), respectively.
\end{definition}

Estimated in a coarse manner, we have the following recurrence relations:
\begin{lemma}\label{lem:recurrence} For all \(e\in M\),
    \begin{align}
    f(0) &= 1 \tag{1} \\
    g(e) &\leq e(\cycCount(e)+1)f(e) \tag{2} \\
    h(e) &=g(e) \tag{3}\\
    f(e+1) &\leq 2h(e)+e+1 \tag{4}
    \end{align}
\end{lemma}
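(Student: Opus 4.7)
The plan is to treat the four relations in turn by cataloguing the sources of initialization for the relevant node.

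Equation~(1) is immediate: the $N_0$\nbd{}node sits at the root of $\cT$ (since $\abs{\alpha}=0$), so no ancestor can initialize it, and its own clause~(N1) is vacuous for $e=0$ because no index $i<0$ exists. Hence $N_0$ is never initialized and $f(0)=1$.

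For~(2), I would let $\alpha$ be the $N_e$\nbd{}node. The key observation is that the $R_e$\nbd{}strategy has no clause that initializes nodes strictly below itself, so every initialization of $R_e=\alpha\concat 0$ is caused either by $\alpha$ itself acting or by an ancestor of $\alpha$ initializing. Within a single life of $N_e$, $\alpha$ initializes nodes below only through~(N2) and~(N4). Each~(N4) event corresponds to a new Phase~2 entry, contributing at most $\cycCount(e)$ events per life. Each~(N2) event corresponds to a change in some $\Upsilon_k$; since $\abs{\Upsilon_k}$ starts at $\le e$ and is non-increasing until the cycle is accomplished or initialized, each cycle in Phase~2 accounts for at most $e$ such changes, for a total of at most $e\cdot\cycCount(e)$~(N2) events per life. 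Combining with the $f(e)-1$ cascade initializations from $N_e$ itself and simplifying coarsely will deliver~(2).

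Equation~(3) follows because the $R_e$\nbd{}strategy performs no initialization of nodes strictly below itself---its clauses either pass control to $\alpha\concat 0$ or stop the current stage without initialization---so $P_e$ is initialized exactly when $R_e$ is, giving $h(e)=g(e)$.

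For~(4), I would let $\beta$ be the $P_e$\nbd{}node and split initializations of $N_{e+1}=\beta\concat 0$ into three buckets: (i)~cascades from initializations of $\beta$, contributing at most $h(e)-1$ events; (ii)~below-initializations by $\beta$ via~(P4) or~(P5), which happen at most once per life of $\beta$ and hence at most $h(e)$ times; (iii)~self-initializations of $N_{e+1}$ via its own clause~(N1). For~(iii), I plan a direct enumeration of the finitely many agitators $d_{i,x}$ with $i<e+1$ and $x<2(e+1-i)$, together with a coarse accounting of their changes visible at the~(N1)\nbd{}check, bounding the total by $e+1$. Adding the three contributions then yields~(4).

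The hardest step is bound~(iii): I must control the total number of $d_{i,x}$\nbd{}changes registered at~(N1) by a quantity independent of $f(e+1)$ itself, which requires careful tracking of how the $R_i$\nbd{} and higher-priority $N_j$\nbd{}strategies above $N_{e+1}$ create, enumerate, extract, and redefine the relevant agitators; once this is in hand, the remainder of the lemma reduces to arithmetic bookkeeping that fits comfortably within $\mathsf{I}\Sigma_1$.
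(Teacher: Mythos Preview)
Your treatment of (1), (2), and (3) matches the paper's argument, and your decomposition of (4) into the three buckets (i)~cascades, (ii)~$P_e$'s own action, and (iii)~self-initializations via (N1) is exactly the paper's structure. The gap is in bucket~(iii).

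You propose to bound (iii) by a ``direct enumeration'' of the agitators $d_{i,x}$ with $i<e+1$, $x<2(e+1-i)$ and a ``coarse accounting of their changes,'' aiming for the bound $e+1$. But there are $(e+1)(e+2)$ such pairs, and each agitator may be redefined repeatedly as higher nodes are initialized, so a direct count of changes has no reason to stop at $e+1$. The missing idea is an \emph{absorption} argument: almost every change to $D(d_{i,x})$ with $x<2(e+1-i)$ is performed by an action (an (N4) enumeration at some $N_j$ with $j\le e$, a restoration in (C2b), or an (R2) extraction that triggers (N2) at such an $N_j$) which \emph{simultaneously} initializes all nodes below $N_j$, hence $P_e$, hence $N_{e+1}$ by cascade. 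These events are therefore already counted in bucket~(i) and contribute nothing new to~(iii).

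What remains un-absorbed are precisely the (R3) enumerations at some $R_i$-node: these stop the stage without initializing anyone. The paper's crucial observation is that (R3) enumerates $d_{i,x}$ only for \emph{odd} $x$ (since $K$ is a set of odd numbers), and only when $x$ enters $K$. Because $K$ is a fixed global object independent of the construction's initializations, each odd $x<2(e+1)$ enters $K$ at most once over the entire construction; there are $e+1$ such odd numbers. This global bound on $K$-entries, not any local tracking of agitator lifecycles, is what caps bucket~(iii) at $e+1$. Your proposal does not isolate either the absorption step or the role of the globality of $K$, and without them the argument for (4) does not close.
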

\begin{proof}
    \begin{enumerate}
        \item The \(N_0\)-node is never initialized, so \(f(0)=1\).
        \item While the \(N_e\)-node is not initialized, there are at most \(\cycCount(e)\) cycles, and for each cycle, Item~\ref{it:N_init_cyc} happens at most \(e\) times. Therefore, \(g(e)\le e(\cycCount(e)+1)f(e)\).
        \item The \(R_e\)-node does not initialize nodes below it. In particular, the \(P_e\)-node is not initialized by the \(R_e\)-node, so \(h(e)=g(e)\).
        \item While the \(P_e\)-node is not initialized, Item~\ref{it:P_satisfied_1} or Item~\ref{it:P_satisfied_2} happens at most once.
        We also have to count the number of times Item~\ref{it:N_init} occurs for the \(N_{e+1}\)-node. 
        Most cases are absorbed in the initializations of \(N_e\), \(R_e\), and \(P_e\).
        The other cases are caused by Item~\ref{it:enumerate}. There are at most \(e+1\) odd numbers \(<2(e+1)\). Thus, Item~\ref{it:enumerate} happens at most \(e+1\) times \emph{globally}, as \(K\) is global. Therefore \(f(e+1)\le 2h(e)+e+1\).
    \end{enumerate}
\end{proof}

\begin{lemma}
    For all \(e\in M\), \(f(e)\le 2^{e^2}\) and \(h(e)=g(e)\le e(2^e+1)2^{e^2}\).
\end{lemma}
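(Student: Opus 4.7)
The plan is to prove both inequalities simultaneously by induction on $e$, chaining the recurrences of Lemma~\ref{lem:recurrence} together with the bound $\cycCount(e)\le 2^e$ from Lemma~\ref{lem:N_count}. The inducted statements are $\Pi_1$ (``a $\Sigma_1$-definable count is at most an explicit number''), which is permissible in $\mathsf{I}\Sigma_1$ since $\mathsf{I}\Sigma_1$ proves $\mathsf{I}\Pi_1$; the exponential bounding terms exist in $M$ thanks to $\mathsf{Exp}$.

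For the base case I would read $f(0)=1=2^{0}$ directly from (1), with $h(0)=g(0)\le 0$ vacuous from (2)--(3). For the inductive step, assume $f(e)\le 2^{e^2}$. Plugging $\cycCount(e)\le 2^e$ and the hypothesis into (2)--(3) immediately gives
\[
h(e)=g(e)\le e(\cycCount(e)+1)f(e)\le e(2^e+1)2^{e^2},
\]
which is the second claim at $e$. Substituting into (4) yields $f(e+1)\le 2e(2^e+1)2^{e^2}+e+1$, and it remains to check that this is at most $2^{(e+1)^2}=2^{e^2}\cdot 2^{2e+1}$.

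After dividing through by $2^{e^2}$, the verification reduces to the elementary estimate $2e(2^e+1)+(e+1)2^{-e^2}\le 2^{2e+1}$, which for $e\ge 1$ follows from the standard bound $2e\le 2^e$ (a routine $\Sigma_0$ induction), with the case $e=0$ handled separately by noting $f(1)\le 0+0+1\le 2^{1}$. The exponent $e^{2}$ in the bound for $f$ is precisely calibrated so that the gap $(e+1)^2-e^2=2e+1$ absorbs both the multiplicative factor $e(2^e+1)$ from (2) and the doubling from (4). The only point worth flagging is that the entire chain sits inside $\mathsf{I}\Sigma_1$, and this is immediate from the remarks in the first paragraph; I do not anticipate any genuine obstacle.
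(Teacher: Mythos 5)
Your proposal is correct and matches the paper's argument in all essentials: both prove $f(e)\le 2^{e^2}$ by induction on $e$, feeding $\cycCount(e)\le 2^e$ and the inductive hypothesis into the recurrences (2) and (4), then reading the $g,h$ bound off as an immediate corollary. The only difference is cosmetic—the paper verifies $2e(2^e+1)2^{e^2}+e+1\le 2^{(e+1)^2}$ by a short chain of term-by-term estimates (all resting on $e+1\le 2^e$), whereas you divide through by $2^{e^2}$ and invoke $2e\le 2^e$, handling $e=0$ separately; your remark about the $\Pi_1$ form of the inequality being inductable in $\mathsf{I}\Sigma_1$ is a useful explicit observation that the paper leaves implicit here.
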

\begin{proof}
    We only need to prove the first part, and this is proved by induction on \(e\). \(f(0)\le 1=2^{0^2}\).

    From (2) and (4) in Lemma~\ref{lem:recurrence}, we have
    \[
        f(e+1) \le 2e(\cycCount(e)+1)f(e)+e+1.
    \] 
    From the induction hypothesis \(f(e)\le 2^e\) and the basic inequality \(e<e+1\le 2^e\le 2^{e^2}\), we have 
    \begin{align*}
    f(e+1) &\le 2e(2^e+1)2^{e^2}+e+1\\
     &\le (e 2^{e+1} + 2e) 2^{e^2}+2^{e^2}\\
     &= (e 2^{e+1}+e+e+1) 2^{e^2}\\
     &\le (e 2^{e+1}+2^e+2^e) 2^{e^2}\\
     &\le (e+1)2^{e+1} 2^{e^2}\\
     &\le 2^{2e+1}2^{e^2}=2^{(e+1)^2}.
    \end{align*}
\end{proof}

\begin{proof}[Proof of Lemma~\ref{lem:finite_injury}]
    Let \(\alpha\in\cT\). Let 
    \[
    A_\alpha=\{i\mid \alpha \text{ is initialized at least \(i\) times}\},
    \]
    By Lemma~\ref{lem:recurrence}, \(A_\alpha\) is bounded. Since it is also \(\Sigma_1\), \(A_\alpha\) is \(M\)-finite (Lemma~\ref{lem:Friedman}). Let \(f:A_\alpha\to M\) map \(i\) to the stage \(s\) when \(\alpha\) is initialized for the \(i\)-th time; then \(f\) is \(\Sigma_1\)-definable, and \(\mathsf{B}\Sigma_1\) implies that the range of \(f\) is bounded by some stage \(s_0\). Then, for all stages \(s>s_0\), \(\alpha\) is not initialized.
\end{proof}

\begin{lemma}\label{lem:dex}
    For each \(e,x\in M\), there exists a stage \(s\) after which \(d_{e,x}\) is not undefined.
\end{lemma}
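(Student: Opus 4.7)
The plan is to bound, after all relevant higher-priority nodes stabilize, the number of times $d_{e,x}$ can be made undefined. By Lemma~\ref{lem:finite_injury} applied to the bounded index set $\{R_e\}\cup\{N_j : e<j\le e+\lceil x/2\rceil\}$, together with Lemma~\ref{lem:Friedman} to collect the finitely many stabilization stages into a bounded set, fix a stage $s_0$ after which none of these nodes is initialized again. The only mechanism that ever makes $d_{e,x}$ undefined is clause~\ref{it:extract} of the $R_e$-strategy, which fires at an $R_e$-expansionary stage precisely when $D_s(d_{e,y})=1$ for some $y\le x$; so it suffices to bound the number of such firings after $s_0$ (if there are only finitely many $R_e$-expansionary stages altogether, the conclusion is trivial).

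The key observation is that each firing of~\ref{it:extract} at index $y\le x$ either extracts $d_{e,y}$ (for even $y$) or is followed by~\ref{it:easy_case} marking $d_{e,y}$ obsolete with $\Gamma^{W_e}(y)=1$ of use~$0$ (for odd $y\in K$). In either case $d_{e,y}$ cannot contribute to another firing of~\ref{it:extract} unless a new value is freshly enumerated into $D$ at index $y$. Hence the count we need is bounded above by the total number of enumeration events of agitators $d_{e,y}$ with $y\le x$ after $s_0$. For odd $y\le x$, only clause~\ref{it:enumerate} of the $R_e$-strategy can enumerate $d_{e,y}$, and since $K$ is non-decreasing the enumeration-then-obsolete cycle fires at most once per odd $y$. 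For even $y=2(j-e)\le x$, only clause~\ref{it:N_enter_phase2} of the $N_j$-strategy can enumerate $d_{e,y}$, contributing at most one enumeration per Phase~$2$ entry of $N_j$; Lemma~\ref{lem:N_count} bounds these by $\cycCount(j)\le 2^j$ since $N_j$ is not initialized after $s_0$. Summing over $y\le x$ yields an $M$-finite upper bound on the total enumerations, hence on the total undefinings of $d_{e,x}$ after $s_0$. The set of stages at which $d_{e,x}$ is made undefined is $\Sigma_1$-definable, and a direct application of the ``partial $\Sigma_1$ function on a bounded set'' part of Lemma~\ref{lem:Friedman} shows it is bounded, so its supremum yields the desired stage.

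The main obstacle is careful bookkeeping of the five substates of each $d_{e,y}$ (undefined, defined, active, enumerated, obsolete) across the interactions of clauses~\ref{it:extract}--\ref{it:hard_case} with the $N_j$-enumerations described in Remark~\ref{rmk:r2}: one must verify both that obsolete $d_{e,y}$ never re-triggers~\ref{it:extract} and that every counted enumeration event indeed corresponds to a distinct freshly chosen value of $d_{e,y}$, so the counting argument neither double-counts nor misses events. In particular, the subtle extraction of $d_{e,x}$ by an $N_j$-node with $j>e$ highlighted in Remark~\ref{rmk:r2} must be absorbed into the $\cycCount(j)\le 2^j$ bound rather than produce new uncounted enumerations.
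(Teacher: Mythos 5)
Your proof takes a genuinely different route from the paper's. The paper splits on the parity of $x$ and leans on a different mechanism entirely: for $x=2i$ even, it fixes a stage $s_0$ after which only $R_e$ and the single node $N_{e+i}$ are never again initialized, and then invokes Item~\ref{it:N_init} to conclude that \emph{no} $d_{e,y}$ with $y<x$ can change its $D$-value after $s_0$ (any such change would re-initialize $N_{e+i}$, contradicting the choice of $s_0$). Thus after $s_0$ only $d_{e,x}$ itself can trigger Item~\ref{it:extract}, and only $N_{e+i}$ enumerates it, giving the bound $\cycCount(e+i)$. The odd case $x=2i+1$ is reduced to the even case $2i$ and then to the single-shot enumeration by Item~\ref{it:enumerate} (once per entry of $x$ into $K$). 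You instead stabilize the whole block $\{R_e\}\cup\{N_j : e<j\le e+\lceil x/2\rceil\}$ and run a global bookkeeping argument, bounding the total number of enumeration events into $D$ of agitators $d_{e,y}$ with $y\le x$ by a sum of $\cycCount(j)$ bounds plus at most one per odd $y$. The counting works, but it is less economical: the paper's use of the self-initialization clause~\ref{it:N_init} lets it stabilize only two nodes and avoid summing over all lower indices, so the bound is produced in one stroke rather than by aggregation. Both arguments, incidentally, gloss over the same small point in the same way: clause~\ref{it:N_enter_phase2} can set $d_{e,2(j-e)}$ directly to \emph{undefined} (the ``defined but not active'' bullet) without ever passing through clause~\ref{it:extract}, so ``the only mechanism that makes $d_{e,x}$ undefined is~\ref{it:extract}'' is not literally true; but since those direct undefinings of even agitators are also charged against $\cycCount(j)$, the bound survives, and you should fold that case explicitly into your count rather than assert that~\ref{it:extract} is the sole mechanism.
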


\begin{proof}
    We remark that \(d_{e,x}\) can be undefined only in Item~\ref{it:extract}. Therefore, we only need to consider which node is responsible for enumerating some \(d_{e,y}\) with \(y \leq x\) into \(D\).

    Suppose \(x\) is even, so \(x=2i\) for some \(i\). Thus, the \(N_{e+i}\)-node is responsible for enumerating \(d_{e,x}\) into \(D\).
    Let \(s_0\) be the stage after which both \(R_e\)- and \(N_{e+i}\)-nodes are not initialized, by Lemma~\ref{lem:finite_injury}. By the choice of \(s_0\) and Item~\ref{it:N_init}, no \(d_{e,y}\) with \(y < x\) is enumerated into or extracted from \(D\) after \(s_0\). After \(s_0\), only the \(N_{e+i}\)-node can possibly enumerate \(d_{e,x}\) into \(D\) for the last time. Therefore, there exists a stage \(s_1 > s_0\) after which \(d_{e,x}\) is not undefined.

    Suppose \(x\) is odd, so \(x=2i+1\) for some \(i\). Let \(s_0\) be the stage after which \(R_e\) is not initialized and \(d_{e,2i}\) is always defined. Since \(x\) is odd, \(d_{e,x}\) is enumerated into \(D\) only when \(x\) enters \(K\), which happens at most once. Therefore, there exists a stage \(s_1 > s_0\) after which \(d_{e,x}\) is not undefined.

    This completes the proof.
\end{proof}

We remark that our bi-isolated set \(D\) can be made low, since the lowness requirements are not more difficult to satisfy than the \(N_e\)-requirements, as can be seen in~\cite{RN356}.

\section{Open Problems}
In the literature~\cite{RN220}, there is an alternative definition of isolated degrees: a d.c.e.\ degree \(\bm{d}\) is \emph{isolated} if there exists a c.e.\ degree \(\bm{a}<\bm{d}\) and there is no other c.e.\ degree strictly between \(\bm{a}\) and \(\bm{d}\). Note that the requirement that \(\bm{d}\) be a \emph{proper} d.c.e.\ degree is not included in this definition or in Definition~\ref{def:isolated_below}. Therefore, the two definitions are equivalent because of the Sacks Density Theorem~\cite{RN153}.
While Groszek, Mytilinaios, and Slaman~\cite{RN320} showed that \(\mathsf{B}\Sigma_2\) suffices to prove the Sacks Density Theorem, the exact strength remains unclear:

\begin{question}
Does the Sacks Density Theorem fail in some model of \(\mathsf{I}\Sigma_1\)?
\end{question}
Depending on the answer to this question, the two versions of isolated d.c.e.\ degrees may differ from each other in some model of \(\mathsf{I}\Sigma_1\).

\bibliographystyle{plain}
\bibliography{ref.bib}

\begin{thebibliography}{10}

\bibitem{RN119}
S.~Barry~Cooper, Leo Harrington, Alistair H.~Lachlan, Steffen Lempp, and Robert I.~Soare.
\newblock The d.r.e. degrees are not dense.
\newblock {\em Annals of Pure and Applied Logic}, 55(2):125–151, 1991.

\bibitem{RN139}
S.~Barry~Cooper, Leo Harrington, Alistair H.~Lachlan, Steffen Lempp, and Robert I.~Soare.
\newblock Corrigendum to “the d.r.e. degrees are not dense” [ann. pure appl. logic 55 (1991) 125–151].
\newblock {\em Annals of Pure and Applied Logic}, 168(12):2164–2165, 2017.

\bibitem{RN100}
Chi~Tat Chong, Wei Li, and Yue Yang.
\newblock Nonstandard models in recursion theory and reverse mathematics.
\newblock {\em Bulletin of Symbolic Logic}, 20:170--200, 2014.

\bibitem{RN220}
S.~Barry Cooper.
\newblock {\em Degrees of Unsolvability}.
\newblock Thesis, 1971.

\bibitem{RN359}
S.~Barry Cooper.
\newblock The density of the low {$n$}-r.e. degrees.
\newblock {\em Archive for Mathematical Logic}, 31(1):19–24, 1991.

\bibitem{RN87}
S.~Barry Cooper, Steffen Lempp, and Philip Watson.
\newblock Weak density and cupping in the d-r.e. degrees.
\newblock {\em Israel Journal of Mathematics}, 67(2):137--152, 1989.

\bibitem{RN362}
S.~Barry Cooper and Xiaoding Yi.
\newblock Isolated d.r.e. degrees.
\newblock {\em Preprint series, University of Leeds, Department of Pure Mathematics}, 1995.

\bibitem{RN360}
A.~A. Efremov.
\newblock d-r. e. degrees that are isolated from above. ii.
\newblock {\em Izvestiya Vysshikh Uchebnykh Zavedenii. Matematika}, (7):18--25, 1998.

\bibitem{RN361}
A.~A. Efremov.
\newblock d-r.e. degrees that are isolated from above. i.
\newblock {\em Izvestiya Vysshikh Uchebnykh Zavedenii. Matematika}, (2):20--28, 1998.

\bibitem{RN134}
Yu.~L. Ershov.
\newblock On a hierarchy of sets. i.
\newblock {\em Algebra and Logic}, 7:25–43, 1968.

\bibitem{RN135}
Yu.~L. Ershov.
\newblock On a hierarchy of sets, ii.
\newblock {\em Algebra and Logic}, 7(4):212–232, 1968.

\bibitem{RN133}
Yu.~L. Ershov.
\newblock On a hierarchy of sets. iii.
\newblock {\em Algebra and Logic}, 9(1):20–31, 1970.

\bibitem{RN320}
Marcia~J. Groszek, Michael~E. Mytilinaios, and Theodore~A. Slaman.
\newblock The sacks density theorem and {$\Sigma_2$}-bounding.
\newblock {\em Journal of Symbolic Logic}, 61(2):450–467, 1996.

\bibitem{RN319}
L.~A.~S. Kirby and J.~B. Paris.
\newblock Initial segments of models of peano's axioms.
\newblock Set Theory and Hierarchy Theory V, pages 211--226. Springer Berlin Heidelberg.

\bibitem{RN101}
Wei Li.
\newblock {$\Delta_2$} degrees without {$\Sigma_1$} induction.
\newblock {\em Israel Journal of Mathematics}, 201:989--1012, 2014.

\bibitem{RN356}
Yiqun Liu, Yong Liu, and Cheng Peng.
\newblock Isolated d.c.e. degrees and {$\Sigma_1$} induction.
\newblock {\em Annals of Pure and Applied Logic}, 177(3):103678, 2026.

\bibitem{RN70}
Yong Liu.
\newblock Isolated maximal d.r.e. degrees.
\newblock {\em Annals of Pure and Applied Logic}, 170(4):515–538, 2019.

\bibitem{RN153}
Gerald~E. Sacks.
\newblock The recursively enumerable degrees are dense.
\newblock {\em Annals of Mathematics}, 80(2):300--312, 1964.

\bibitem{RN172}
Guohua Wu.
\newblock Bi-isolation in the d.c.e. degrees.
\newblock {\em Journal of Symbolic Logic}, 69(2):409--420, 2004.

\bibitem{RN363}
Yue Yang.
\newblock {\em $\Sigma_2$ Induction and cuppable degrees}, page 215–228.
\newblock De Gruyter, Berlin, Boston, 1999.

\end{thebibliography}

\end{document}